\newcommand{\R}{\mathbb{R}}
\newcommand{\E}{\mathbb{E}}
\newcommand{\N}{\mathbb{N}}
\newcommand{\Var}{\mathop{\mathrm{Var}}\nolimits}
\newcommand{\Cov}{\mathop{\mathrm{Cov}}\nolimits}
\renewcommand{\P}{\mathbb{P}}
\newcommand{\eps}{\varepsilon}
\newcommand{\ind}{\mathbbm{1}}
\newcommand{\1}{\mathbbm{1}}
\DeclareMathOperator*{\argmax}{arg\,max}
\DeclareMathOperator*{\argmin}{arg\,min}
\newcommand{\od}{\stackrel{d}{=}}
\newcommand{\todistr}{\overset{d}{\underset{n\to\infty}\longrightarrow}}
\newcommand{\toprobab}{\overset{P}{\underset{n\to\infty}\longrightarrow}}
\newcommand{\minorant}{S^\smile}
\newcommand{\majorant}{S^\frown}
\newcommand{\minlength}{L^\smile}
\newcommand{\majlength}{L^\frown}
\newcommand{\tminlength}{\tilde{L}^\smile}
\theoremstyle{plain}
\newtheorem{theorem}{Theorem}[section]
\newtheorem{lemma}[theorem]{Lemma}
\newtheorem{corollary}[theorem]{Corollary}
\newtheorem{proposition}[theorem]{Proposition}
\theoremstyle{definition}
\theoremstyle{remark}
\newtheorem{remark}[theorem]{Remark}
\begin{document}

\title{How long is the convex minorant of a one-dimensional random walk?}

\author[G.~Alsmeyer, Z.~Kabluchko, A.~Marynych and V.~Vysotsky]{Gerold Alsmeyer, Zakhar Kabluchko, Alexander Marynych\\ and Vladislav Vysotsky}

\begin{abstract}
We prove distributional limit theorems for the length of the largest convex minorant of a one-dimensional random walk with independent identically distributed increments.  Depending on the increment law, there are several regimes with different limit distributions for this length. Among other tools, a representation of the convex minorant of a random walk in terms of uniform random permutations is utilized.
\end{abstract}

\keywords{convex minorant, random permutation, random walk} \subjclass[2010]{Primary:
60F05, 60G55; Secondary: 60J10.}

\maketitle

\section{Introduction and main results}

Given a sequence $(\xi_{k})_{k\in\N}$ of independent and identically distributed (i.i.d.) real-valued random variables with a generic copy $\xi$, consider the associated random walk $(S_{n})_{n\in\N_{0}}$, $\N_{0}:=\N\cup\{0\}$, defined by
$$
S_{0}:=0\quad\text{and}\quad S_{n}:=\xi_{1}+\xi_{2}+\cdots+\xi_{n}\quad\text{for }n\in\N,
$$
and the random piecewise linear function $t\mapsto S(t)$, $t\ge 0$, obtained by linear interpolation between the values $S(n):=S_{n}$ for $n \in \N_{0}$. For any fixed $T>0$, let $t\mapsto \minorant_T(t)$ and $t\mapsto \majorant_T(t)$ be, respectively, the convex minorant and the concave majorant of the function $t\mapsto S(t)$ on the interval $[0,T]$. Let us recall that the convex minorant (concave majorant) of a function $f$ on an interval $[a,b]$ is the largest convex (least concave) function $f^{{}^\smile}$ ($f^{{}^\frown}$) such that $f^{{}^\smile}(x)\le f(x)$ ($f(x)\le f^{{}^\frown}(x)$) for all $x\in[a,b]$. Clearly, both  $t\mapsto \minorant_T(t)$ and $t\mapsto  \majorant_T(t)$ are piecewise linear continuous functions and have therefore well-defined finite lengths, here denoted by $\minlength_T$ and  $\majlength_T$, respectively.

\vspace{.1cm}
In this paper, we provide distributional limit theorems for $\minlength_{n}$ and $\majlength_{n}$ as $n\to\infty$ in the following three regimes:
\begin{itemize}\itemsep2pt
\item[(A)] $\E \xi^{2}<\infty$ and $\E \xi=0$; 
\item[(B)] the law of $\xi$ lies in the domain of attraction of an $\alpha$-stable law with $\alpha\in(1,2)$ and $\E \xi=0$;  
\item[(C)] the law of $\xi$ lies in the domain of attraction of an $\alpha$-stable law with $\alpha\in(0,1)$.
\end{itemize}
The case $\E \xi\neq 0$ in parts (A) and (B) turns out to be less intriguing and will be discussed in Section~\ref{sec:non_zero_mu}.

\vspace{.1cm}
Let us point out at the outset that it suffices to consider the length of the convex minorant $\minlength_{n}$ because it has the same law as $\majlength_{n}$, so
\begin{equation}\label{eq:minlength=majlength in law}
\minlength_{n}\,\od\,\majlength_{n}\quad\text{for all }n\in\N.
\end{equation}
Although nonintuitive, this follows fairly easily from the observation that the concave majorant of $(S_{0},\ldots,S_{n})$ for any $n$ coincides with the negative of the convex minorant of the reflected vector $(-S_{0},\ldots,-S_{n})$ in combination with a distributional representation of $\minlength_{n}$, stated as \eqref{eq:basic_representation2} and owing to Abramson et al. \cite{Abramson+Pitman:2011,Abramson+Pitman+Ross+Bravo:2011}, which only involves the squares of the $S_{k}$.

\vspace{.1cm}
Before putting our work into some context by pointing out connections with earlier work on convex minorants and the convex hulls of random walks, we present our main results, stated as Theorems \ref{thm:finite_variance}, \ref{thm:infinite_variance} and \ref{thm:infinite_mean}.

\vspace{.1cm}
It is well-known that in each of the cases (A), (B) and (C), there exists a sequence $(a_{n})_{n\in\N}$ of positive constants such that, with $\mathcal{S}_{\alpha}=\left(\mathcal{S}_{\alpha}(t)\right)_{t\in [0,1]}$ denoting an $\alpha$-stable L\'{e}vy process,
\begin{equation}\label{eq:clt_for_rw}
\left(\frac{S(nt)}{a_{n}}\right)_{t\in [0,1]}\ \xRightarrow{n\to\infty}\ (\mathcal{S}_{\alpha}(t))_{t\in[0,1]}
\end{equation}
in the Skorokhod space $D[0,1]$ endowed with the standard $J_{1}$-topology. Note that $\mathcal{S}_{2}$ is just a centered Brownian motion. Throughout the paper, we always use $a_{n}$ and $\mathcal{S}_{\alpha}$ for the normalization and the $\alpha$-stable L\'{e}vy process such that \eqref{eq:clt_for_rw} holds. Also, we let $\mathcal{N}(0, s^{2})$ denote the  normal distribution with mean zero and positive variance $s^{2}$.
The notation $\simeq$ stands for asymptotic equivalence, that is, $f(x)\simeq g(x)$, as $x\to x_0$, holds if and only if $\lim_{x\to x_0}f(x)/g(x)=1$.

\vspace{.1cm}
In order to state our result for case (A), put
\begin{equation}\label{eq:sigma_{n}_definition}
\sigma^{2}_{n}\ :=\Var\left(\xi \ind_{\{|\xi |\le \sqrt{n}\}}\right), \quad n \in \N,
\end{equation}
and $\log^{+}x:=\max(\log x,0)$ for $x>0$.

\begin{theorem}\label{thm:finite_variance}
Suppose that $\E \xi=0$ and $\E\xi^{2}=:\sigma^{2}=\lim_{n\to\infty}\sigma_{n}^{2}<\infty$.  
Then
\begin{equation}\label{eq:clt_finite_variance}
\frac{1}{\sqrt{\log n}}\left(\minlength_{n} - n - \sum_{j=1}^{n}\frac{\sigma_{j}^{2}}{2j}\right)\ \todistr\  \mathcal{N}\left(0,\frac{3\sigma^{4}}{4}\right),
\end{equation}
which may be simplified to
\begin{equation}\label{eq:clt_finite_variance2}
\frac{1}{\sqrt{\log n}}\left(\minlength_{n} - n - \frac{\sigma^{2}}{2}\log n\right)\ \todistr\ \mathcal{N}\left(0,\frac{3\sigma^{4}}{4}\right)
\end{equation}
under the additional assumption $\E\xi^{2}\log^{+} |\xi|<\infty$.
\end{theorem}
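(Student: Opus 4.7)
The natural starting point is the Abramson--Pitman-type representation~\eqref{eq:basic_representation2}, which gives
\[
\minlength_n \;\od\; \sum_{k=1}^n \sum_{i=1}^{A_k^{(n)}}\sqrt{k^2+\bigl(\widetilde S^{(k,i)}_k\bigr)^2},
\]
where $A_k^{(n)}$ is the number of $k$-cycles in a uniform random permutation of $\{1,\ldots,n\}$ and the $\widetilde S^{(k,i)}$ are independent copies of the walk, independent of the permutation. Combining $\sum_k k\,A_k^{(n)}\equiv n$ with the identity $\sqrt{k^2+x^2}=k+\frac{x^2}{2k}-\frac{x^4}{2k(\sqrt{k^2+x^2}+k)^2}$ yields
\[
\minlength_n - n \;=\; M_n \;-\; \mathcal R_n,\qquad M_n := \sum_{k=1}^n\sum_{i=1}^{A_k^{(n)}} \frac{(\widetilde S^{(k,i)}_k)^2}{2k},
\]
with $0\le\mathcal R_n\le\frac18\sum_{k,i}(\widetilde S^{(k,i)}_k)^4/k^3$. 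The first task is to show $\mathcal R_n=o_P(\sqrt{\log n})$. Since one cannot assume a fourth moment, this I would do by truncating each increment at level $\sqrt{k}$: tail-summation against $\E\xi^2<\infty$ handles the unlikely event that any increment exceeds the truncation level, while the Burkholder--Rosenthal bound $\E[(S_k^{\text{tr}})^4]=O(k^2\sigma_k^4)$ gives $\E[\mathcal R_n^{\text{tr}}]=O(\sum_k 1/k^2)=O(1)$.

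With the truncation in place, I replace $(\widetilde S^{(k,i)}_k)^2$ by its truncated analogue and split
\[
M_n \;=\; T_n + R_n,\qquad T_n := \tfrac12\sum_{k=1}^n A_k^{(n)}\sigma_k^2,\qquad R_n := \sum_{k,i}\frac{(\widetilde S^{(k,i),\text{tr}}_k)^2-k\sigma_k^2}{2k};
\]
the choice of $\sigma_k^2$ is exactly what makes $\E T_n=\sum_{j\le n}\sigma_j^2/(2j)$, i.e., the centering in~\eqref{eq:clt_finite_variance}. For $T_n$: the Ewens factorial-moment identities $\E[(A_j)_{m_1}(A_k)_{m_2}]=(j^{m_1}k^{m_2})^{-1}$ imply $\Cov(A_j^{(n)},A_k^{(n)})=0$ for $j\ne k$ and $\Var A_k^{(n)}\sim 1/k$, so $\Var T_n\sim \sigma^4\log n/4$ and a CLT for linear cycle-count statistics (standard via the Feller coupling, which reduces the sum to a perturbation of independent $\mathrm{Poisson}(1/k)$ summands) yields $(T_n-\E T_n)/\sqrt{\log n}\todistr\mathcal N(0,\sigma^4/4)$. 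For $R_n$: conditionally on the permutation $\pi$, it is a sum of independent centered variables of conditional variance $\frac1{4}\sum_k A_k^{(n)}\Var((S_k^{\text{tr}})^2)/k^2$; the computation $\E[(\tilde S_k^{\text{tr}})^4]=3k(k-1)\sigma_k^4+O(k^2)$ gives $\Var((S_k^{\text{tr}})^2)\sim 2k^2\sigma^4$, so this conditional variance concentrates on $\tfrac{\sigma^4}{2}K_n^{(n)}\sim\tfrac{\sigma^4}{2}\log n$ (using Goncharov's law $K_n^{(n)}/\log n\to 1$ in probability for $K_n^{(n)}:=\sum_k A_k^{(n)}$), and a conditional Lindeberg argument delivers $R_n/\sqrt{\log n}\todistr\mathcal N(0,\sigma^4/2)$.

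Because $\E[R_n\mid\pi]\equiv 0$, the variables $T_n$ and $R_n$ are uncorrelated, and the conditional Gaussianity of $R_n$ given $\pi$ together with the Gaussianity of $T_n$ lifts this to joint asymptotic independence; summing the two limiting variances gives $3\sigma^4/4$ as claimed in~\eqref{eq:clt_finite_variance}. The simpler form~\eqref{eq:clt_finite_variance2} then follows from the estimate $\sum_{j\le n}(\sigma^2-\sigma_j^2)/j=o(\sqrt{\log n})$ under $\E\xi^2\log^+|\xi|<\infty$, which by Abel summation reduces to the summability of $\sum_j \E[\xi^2\ind\{|\xi|>\sqrt j\}]/j$. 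I expect the main technical difficulty to lie in the consistent orchestration of the three $o(\sqrt{\log n})$ error terms --- the Taylor remainder $\mathcal R_n$, the mean shift induced by centering $\xi^{\text{tr}}$, and the Lindeberg condition for $R_n$ --- all of which require quantitative tail estimates that are only just afforded by the bare hypothesis $\E\xi^2<\infty$.
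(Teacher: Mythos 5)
Your plan is sound and reaches the correct limit, but by a genuinely different route from the paper's. The paper first Poissonizes the cycle counts via the Feller coupling ($K_{n,j}\to P_{j}$, independent Poisson$(1/j)$, with total $L^{1}$-error $<2$), discards the a.s.\ finitely many $j$ with $P_{j}\ge 2$, and is then left with a \emph{single} triangular array of independent summands $\frac{S_{1,j}^{2}}{2j}\ind_{\{P_{j}=1\}}$; the CLT is extracted from Petrov's general theorem for row sums (with truncation built into the hypotheses, so Lindeberg with fourth moments is never needed), the key analytic input being the summable-in-$j$ Berry--Esseen-type rates of Friedman--Katz--Koopmans/Davis, and the Taylor remainder is killed by LIL convergence rates. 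You instead condition on the permutation and split the fluctuation into a $\pi$-measurable part $T_{n}\approx\frac{\sigma^{2}}{2}K_{n}$ (variance $\frac{\sigma^{4}}{4}\log n$ via Goncharov's CLT) plus a conditionally centered part $R_{n}$ (conditional variance concentrating at $\frac{\sigma^{4}}{2}\log n$), with asymptotic independence read off from the conditional characteristic function; the missing fourth moment is manufactured by truncating increments at $\sqrt{k}$ and invoking Rosenthal. Your split has the merit of \emph{explaining} the constant $\frac{3\sigma^{4}}{4}=\frac{\sigma^{4}}{4}+\frac{\sigma^{4}}{2}$ as cycle-count noise plus walk noise, whereas the paper's reduction to $P_{j}\in\{0,1\}$ buys a cleaner error analysis, since everything collapses to one array of genuinely independent terms.

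Three points need care in the execution, none of which breaks the strategy. (i) $\Cov(A_{j}^{(n)},A_{k}^{(n)})=0$ only for $j+k\le n$ (it equals $-1/(jk)$ otherwise); the boundary terms contribute $O(1)$ to $\Var T_{n}$, so this is harmless but should be said. (ii) With truncation at $\sqrt{k}$ one only has $\E[(\xi^{\mathrm{tr}})^{4}]\le k\,\E\xi^{2}$, so $\Var\big((S_{k}^{\mathrm{tr}})^{2}\big)=2k^{2}\sigma_{k}^{4}+k\nu_{k}+O(k)$ with $k\nu_{k}$ possibly of order $k^{2}$; your claim $\sim 2k^{2}\sigma^{4}$ is not valid term by term, but the correction contributes only $\sum_{k}\nu_{k}/(4k^{2})=O(1)$ to the conditional variance (sum the tail of $k^{-2}$ against $\xi^{4}$), so the conclusion survives. (iii) The mean shift: $\E[(S_{k}^{\mathrm{tr}})^{2}]=k\sigma_{k}^{2}+k^{2}\mu_{k}^{2}$ with $\mu_{k}=\E[\xi\ind_{\{|\xi|\le\sqrt{k}\}}]$, so $\E[R_{n}\mid\pi]$ is not exactly $0$ and leaves a residue of mean $\sum_{k\le n}\mu_{k}^{2}/2$. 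The crude bound $|\mu_{k}|\le k^{-1/2}\E[\xi^{2}\ind_{\{|\xi|>\sqrt{k}\}}]$ only gives $o(\log n)$, which is \emph{not} enough; but in fact $\sum_{k}\mu_{k}^{2}\le\E\big[|\xi\xi'|\min(\xi^{2},\xi'^{2})\big]\le(\E\xi^{2})^{2}<\infty$ for an independent copy $\xi'$, so the residue is $O_{P}(1)$ under $\E\xi^{2}<\infty$ alone. With these repairs, and your (correct) observation that $\sum_{j\le n}(\sigma^{2}-\sigma_{j}^{2})/j$ converges iff $\E\xi^{2}\log^{+}|\xi|<\infty$, the argument goes through.
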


In view of the previous result, one could expect that in case (B) a suitable normalization of $\minlength_{n}$ converges in law to some stable law. It may therefore be surprising that the true answer, stated in the next theorem, looks more complicated.

\begin{theorem}\label{thm:infinite_variance}
Suppose that the following assumptions hold:
\begin{itemize}[leftmargin=1.1cm]\itemsep2pt
\item[(B1)] The function $t\mapsto \P\{|\xi|>t\}$ is regularly varying at infinity with index $\alpha \in (1,2)$ and $\E \xi=0$.
\item[(B2)] For $p,q\in[0,1]$ such that $p+q=1$, we have
$$
\lim_{x\to+\infty}\frac{\P\{\xi>x\}}{\P\{|\xi|>x\}}\,=\,p\quad\text{and}\quad\lim_{x\to+\infty}\frac{\P\{\xi<-x\}}{\P\{|\xi|>x\}}\,=\,q.
$$
\end{itemize}
Then
$$
\frac{n}{a_{n}^{2}}\left(\minlength_{n}-n\right)\ \todistr\ \frac{1}{2}\sum_{k=1}^{\infty}\frac{(\mathcal{S}_{\alpha}^{(k)}(Z_{k}))^{2}}{Z_{k}},
$$
where $(Z_{1},Z_{2},\ldots)$ is a random sequence that has a Poisson--Dirichlet distribution with parameter $\theta=1$ and  $(\mathcal{S}_{\alpha}^{(k)}(t))_{t\in[0,1]}$, $k=1,2, \ldots,$ are independent copies of the $\alpha$-stable process $(\mathcal{S}_{\alpha}(t))_{t\in[0,1]}$ appearing in \eqref{eq:clt_for_rw}.
\end{theorem}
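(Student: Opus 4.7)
\medskip
\noindent\textbf{Proof plan.} The starting point is the distributional representation \eqref{eq:basic_representation2}, which yields
$$
\minlength_n \,\od\, \sum_{j=1}^{K_n}\sqrt{\ell_j^2+T_j^2},
$$
where $(\ell_1,\ldots,\ell_{K_n})$ are the cycle lengths of a uniform random permutation of $\{1,\ldots,n\}$ and, conditionally on them, $T_1,\ldots,T_{K_n}$ are independent with $T_j\od S_{\ell_j}$. The identity $\sqrt{\ell^2+s^2}-\ell=s^2/(\sqrt{\ell^2+s^2}+\ell)$ together with $\sum_j\ell_j=n$ then gives
$$
\minlength_n-n \,\od\, \frac{1}{2}\sum_{j=1}^{K_n}\frac{T_j^2}{\ell_j}-R_n,
$$
where $R_n\ge 0$ is a Taylor remainder satisfying, per cycle, $r_j\le C\,T_j^4/\ell_j^3$ when $|T_j|\le\ell_j$ and $r_j\le C|T_j|$ otherwise (inherited from the trivial bound $\sqrt{\ell^2+s^2}-\ell\le|s|$).

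The plan is to combine two independent limit theorems. First, by Kingman's theorem the ordered cycle-length proportions satisfy $(\ell_{(k)}/n)_{k\ge 1}\todistr(Z_k)_{k\ge 1}\sim PD(1)$. Second, for each $k$ the invariance principle \eqref{eq:clt_for_rw} applied to the independent copy of $S$ attached to the $k$-th cycle yields $T_{(k)}/a_{\ell_{(k)}}\todistr\mathcal S_\alpha^{(k)}(1)$; regular variation of $a_n$ with index $1/\alpha$, combined with the self-similarity $\mathcal S_\alpha^{(k)}(Z_k)\od Z_k^{1/\alpha}\mathcal S_\alpha^{(k)}(1)$, upgrades this to $T_{(k)}/a_n\todistr\mathcal S_\alpha^{(k)}(Z_k)$. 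Joint convergence holds by independence across cycles, so the continuous mapping theorem gives, for each fixed $K\ge 1$,
$$
\frac{n}{a_n^2}\sum_{k=1}^{K}\frac{T_{(k)}^2}{2\,\ell_{(k)}}\;\todistr\;\frac{1}{2}\sum_{k=1}^{K}\frac{\bigl(\mathcal S_\alpha^{(k)}(Z_k)\bigr)^2}{Z_k}.
$$

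The hard part is the passage $K\to\infty$, which requires controlling the tail contribution of small cycles on both sides. On the limit side, I would derive a.s.\ finiteness of the series from $\E\sum_k Z_k^s=1/s<\infty$ for $s>0$ (a consequence of the $GEM(1)$ representation of $PD(1)$): combined with the $\alpha/2$-stable tail $\P\{\mathcal S_\alpha(1)^2>t\}\simeq ct^{-\alpha/2}$ and Kolmogorov's three-series theorem applied conditionally on $(Z_k)$, this reduces the question to the a.s.\ convergence of $\sum_k Z_k^{1-\alpha/2}$, which follows from $\E\sum_k Z_k^{1-\alpha/2}=1/(1-\alpha/2)<\infty$. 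On the prelimit side, I would have to establish a matching tightness estimate
$$
\lim_{K\to\infty}\limsup_{n\to\infty}\P\!\Bigl\{\frac{n}{a_n^2}\sum_{k>K}\bigl(\sqrt{\ell_{(k)}^2+T_{(k)}^2}-\ell_{(k)}\bigr)>\eta\Bigr\}=0,\qquad\eta>0.
$$
Because of the heavy tails of $T_j$, this cannot be obtained from second-moment bounds on $T_\ell^2/\ell$ alone; instead one must combine the truncated Karamata bound $\E[T_\ell^2\ind_{\{|T_\ell|\le a_\ell\}}]\le Ca_\ell^2$ with a tail estimate for the contribution of atypically large $|T_j|$, and exploit the asymptotics $a_{\ell_{(k)}}^2/\ell_{(k)}\sim(a_n^2/n)\,Z_k^{2/\alpha-1}$ together with the joint law of the cycle lengths to reduce the tightness question to the already-established a.s.\ summability of $\sum_k Z_k^{2/\alpha-1}$.

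Finally, the remainder $R_n$ is dealt with by the same dichotomy: for cycles of rank $\le K$ one has $|T_{(k)}|/\ell_{(k)}=O_\P(n^{-(\alpha-1)/\alpha})=o_\P(1)$, so the quartic bound makes $R_n$ of smaller order than the main sum; for ranks $>K$, the crude bound $r_j\le T_j^2/(2\ell_j)$ combined with the tightness estimate above absorbs the remainder into the control of the tail sum. Putting everything together yields $\frac{n}{a_n^2}R_n\toprobab 0$, and the theorem follows.
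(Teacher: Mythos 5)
Your overall architecture is the same as the paper's: start from the cycle representation \eqref{eq:basic_representation1}, prove convergence of the $K$ largest cycles' contribution via \eqref{eq:clt_for_rw} and the Poisson--Dirichlet limit of the ranked cycle proportions, check a.s.\ finiteness of the limit series, and close with Billingsley's Theorem~3.2, so that everything hinges on the uniform-in-$n$ negligibility of the small-cycle tail. Your finiteness argument (conditional three-series reduction to $\sum_k Z_k^{1-\alpha/2}$) is a correct alternative to the paper's fractional-moment computation in Proposition~\ref{prop:limit_is_finite}, and your handling of the Taylor remainder is harmless (the paper avoids it altogether by keeping the exact identity $\sqrt{\ell^2+s^2}-\ell=s^2/(\sqrt{\ell^2+s^2}+\ell)$ for the top $K$ cycles and using the bound $s^2/(2\ell)$ only in the tail).

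The gap is in the tail estimate itself, which is the heart of the proof and is only gestured at. Two concrete problems. First, the truncation you propose, at level $a_\ell$ per cycle, does not isolate a rare event: $\P\{|S_\ell|>a_\ell\}$ tends to a positive constant and there are of order $\log n$ cycles, so a union bound over the ``atypically large'' parts fails as stated; any truncation scheme has to be pegged to the target threshold $\varepsilon b_n$ (i.e.\ to $a_n\sqrt{\ell/n}$), not to $a_\ell$. Second, and more importantly, you ``reduce'' tightness to the a.s.\ summability of $\sum_k Z_k^{2/\alpha-1}$ (a series you did not in fact establish -- you treated $\sum_k Z_k^{1-\alpha/2}$), but a.s.\ summability of the \emph{limiting} Poisson--Dirichlet series does not control the prelimit sums $\sum_{k>K}$ over the ranked cycle lengths of the finite permutation uniformly in $n$: the asymptotics $a_{\ell_{(k)}}^2/\ell_{(k)}\sim (a_n^2/n)\,Z_k^{2/\alpha-1}$ are not uniform in $k$ (for small cycles $\ell_{(k)}/n\to 0$), and ``exploit the joint law of the cycle lengths'' is precisely the step that needs an argument. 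The paper closes it quantitatively: it first replaces the cycle lengths by $\lfloor nZ_k\rfloor$ using the Arratia--Barbour--Tavar\'e coupling with $L^1$ error of order $n^{-1}\log n$ together with the Lipschitz bound of Lemma~\ref{lem:lipschitz}, and then bounds the tail by a fractional moment of order $\delta<\alpha/2$ (no truncation needed), using $\E|S_\ell|^{2\delta}\le C a_\ell^{2\delta}$ from \eqref{eq:moments_bound}, Potter's bound to compare $b_{\lfloor nZ_k\rfloor}$ with $b_n Z_k^{(2/\alpha-1)\delta-\delta'}$ uniformly, and the exact identity $\sum_k\E Z_k^{s}=1/s$. (Alternatively, one can argue directly on the permutation side, bounding the rank-$>K$ cycles by all cycles of length at most $n/(K+1)$ and using $\E K_{n,j}=1/j$ with the same $\delta$-moment bound.) Some quantitative device of this kind -- expectation or coupling bounds valid uniformly in $k$ and $n$ -- is indispensable, and without it your passage $K\to\infty$ is not proved.
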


The proofs of both theorems rely on a known representation of the convex minorant in terms of uniform random permutations that will be described in Subsection \ref{subsec:uniform_perm}, followed by some explanations of the main arguments in Subsection~\ref{subsec:explanation}. The main difference between the cases (A) and (B) is that, roughly speaking, in case (A) the main contributions to the fluctuations of $\minlength_{n}$ are due to a large number of ``small'' segments of the convex minorant, whereas in case (B) they are rather due to few ``large'' segments.

\vspace{.1cm}
Our third result deals with the case when $\xi$ lies in the domain of attraction of a stable law with index $\alpha\in(0,1)$. This is the simplest case because rather than making use of
the connection with random permutations, a simple comparison argument applies; see Subsection \ref{sec:proof_inf_mean} below. 

\begin{theorem}\label{thm:infinite_mean}
Suppose that the following assumptions hold:
\begin{itemize}[leftmargin=1.1cm]\itemsep2pt
\item[(C1)] The function $t\mapsto \P\{|\xi|>t\}$ is regularly varying at infinity with index $\alpha \in (0,1)$.
\item[(C2)] For some $p,q\in[0,1]$ with $p+q=1$,
$$
\lim_{x\to+\infty}\frac{\P\{\xi>x\}}{\P\{|\xi|>x\}}\,=\,p\quad\text{and}\quad\lim_{x\to+\infty}\frac{\P\{\xi<-x\}}{\P\{|\xi|>x\}}\,=\,q.
$$
\end{itemize}
Then
$$
\left(\frac{\minlength_{n}}{a_{n}}, \frac{\majlength_{n}}{a_{n}}\right)\todistr \left(\mathcal{S}_{\alpha}(1)-2\inf_{t\in [0,1]}\mathcal{S}_{\alpha}(t), 2\sup_{t\in [0,1]}\mathcal{S}_{\alpha}(t)-\mathcal{S}_{\alpha}(1)\right),
$$
where $(a_{n})_{n\in\N}$ and $(\mathcal{S}_{\alpha}(t))_{t\in[0,1]}$ are as in \eqref{eq:clt_for_rw}.
\end{theorem}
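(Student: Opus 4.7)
My plan is as follows. The key observation for the regime $\alpha\in(0,1)$ is that the normalising constants satisfy $a_n = n^{1/\alpha}\ell(n)$ for some slowly varying $\ell$, so that $n/a_n\to 0$. Hence the horizontal contribution to the length, which is bounded by $n$, becomes negligible on the scale $a_n$, and the length of the minorant/majorant is governed solely by its vertical variation, which can be expressed in terms of the endpoint and the extrema of the walk.

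The first step is the deterministic bound
\begin{equation*}
\max\bigl(n,\,S_n-2m_n\bigr)\ \le\ \minlength_n\ \le\ n+(S_n-2m_n),
\end{equation*}
where $m_n := \min_{0\le k\le n}S_k$, together with the analogous two-sided bound for $\majlength_n$ with $M_n := \max_{0\le k\le n}S_k$ replacing $m_n$ and $2M_n-S_n$ replacing $S_n-2m_n$. These follow segment by segment from the elementary inequality $\max(|u|,|v|)\le\sqrt{u^2+v^2}\le|u|+|v|$, once one observes that the total absolute vertical variation of a convex polygonal path from $(0,0)$ to $(n,S_n)$ equals $(0-m_n)+(S_n-m_n)=S_n-2m_n$; here one uses that the vertices of the convex minorant lie on the graph of the walk, so the minimum of $\minorant_n$ on $[0,n]$ coincides with $m_n$, and similarly for the concave majorant. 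Dividing by $a_n$ and invoking $n/a_n\to 0$ yields
\begin{equation*}
\frac{\minlength_n}{a_n} = \frac{S_n-2m_n}{a_n}+o(1),\qquad \frac{\majlength_n}{a_n} = \frac{2M_n-S_n}{a_n}+o(1),
\end{equation*}
where the error terms are deterministically $O(n/a_n)$.

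The second step is to combine this with the functional limit theorem \eqref{eq:clt_for_rw}. Applying the continuous mapping theorem to the map $D[0,1]\to\R^{3}$ defined by $f\mapsto\bigl(f(1),\inf_{[0,1]}f,\sup_{[0,1]}f\bigr)$, one gets the joint convergence
\begin{equation*}
\left(\frac{S_n}{a_n},\frac{m_n}{a_n},\frac{M_n}{a_n}\right)\ \todistr\ \left(\mathcal{S}_\alpha(1),\,\inf_{t\in[0,1]}\mathcal{S}_\alpha(t),\,\sup_{t\in[0,1]}\mathcal{S}_\alpha(t)\right).
\end{equation*}
Combining this with the previous display via Slutsky's lemma delivers the stated joint weak limit.

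The only point that requires care is the applicability of the continuous mapping theorem in the last step: the functional $f\mapsto(f(1),\inf f,\sup f)$ is not continuous on all of $D[0,1]$ with the $J_1$-topology, but only at càdlàg functions that are continuous at $t=1$ and attain their infimum and supremum at unique interior points. Since the $\alpha$-stable Lévy process $\mathcal{S}_\alpha$ almost surely has no fixed time of discontinuity and attains its extrema on $[0,1]$ at unique times, both properties are satisfied a.s., and this is a standard fact. I therefore expect no serious obstacle in carrying out the argument.
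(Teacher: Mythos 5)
Your argument is correct and is essentially the paper's own proof: the same geometric sandwich $S_n-2m_n\le \minlength_n\le S_n-2m_n+O(n)$ (and its mirror for $\majlength_n$), followed by $n/a_n\to 0$ for $\alpha\in(0,1)$ and the joint convergence of $(S_n,m_n,M_n)/a_n$ via the continuous mapping theorem applied to \eqref{eq:clt_for_rw}. The only cosmetic difference is that you derive the bounds from the total vertical variation of the convex polygonal path, while the paper splits the majorant/minorant at the argmax/argmin and applies the triangle inequality to each piece; both yield the same conclusion.
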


\begin{remark} \label{rem:non-joint}\rm
There is an interesting connection of our results, notably Theorem \ref{thm:finite_variance}, with the work by Wade and coauthors \cite{McRedmond+Wade:2018,Wade+Xu:2015a,Wade+Xu:2015b} on the convex hulls of planar random walks. Assuming $\E\xi =0$, one can regard the bivariate sequence $\{(n,S_{n})\}_{n\in\N_{0}}$ as a degenerate walk in the plane whose increments are supported on the line orthogonal to the mean vector $(1,0)$ and thus to the $x$-axis. Except for this degenerate case, it was shown by Wade and Xu \cite[Thms. 1.1 and 1.2]{Wade+Xu:2015b} that the perimeter of the convex hull of the first $n$ steps of any square-integrable planar random walk satisfies a central limit theorem and has linearly growing variance as $n \to \infty$. But in the degenerate case, their approach only provides that this growth is sublinear. Note that Theorem~\ref{thm:finite_variance} provides no information on the asymptotic behavior of the moments of $\minlength_{n}$, as it does not claim any type of uniform integrability. The moment asymptotics are specified in Theorem~\ref{thm:moments} below. Remarkably, the variance of $\minlength_{n}$ grows logarithmically if $\xi$ has a finite third moment but it may grow polynomially when $\E |\xi|^3 = \infty$, see \eqref{eq:prop_variance_large} and~\eqref{eq:prop_variance_log_asymp}.

\vspace{.1cm}
Note further that the perimeter $L_{n}$ of the convex hull of $\{(j,S_{j}): j=0,\ldots,n\}$ equals $\minlength_{n}+\majlength_{n}$. Under the assumptions of our Theorem \ref{thm:infinite_mean} (Case (C)), we therefore immediately infer a distributional limit result for $L_{n}$, namely 
$$
\frac{L_{n}}{2a_{n}}\ \todistr\  \sup_{t\in [0,1]}\mathcal{S}_{\alpha}(t)-\inf_{t\in [0,1]}\mathcal{S}_{\alpha}(t).
$$
On the other hand and despite relation \eqref{eq:minlength=majlength in law}, we do not know in the other cases whether \emph{joint} convergence of $(\minlength_{n},\majlength_{n})$ holds which would give a limit theorem for the perimeter $L_{n}$ in all cases. The connection with random permutations seems to be insufficient for this purpose and we  leave this as an open problem. 
\end{remark}


\begin{theorem}\label{thm:moments}
Suppose that $\E\xi=0$ and $\sigma^{2}=\E\xi^{2} \in (0,\infty)$. Then
\begin{equation}\label{eq:prop_first_moment}
\E \minlength_{n} -n\ = \ \frac{\sigma^{2}}{2}\log n + o(\log n) \quad\text{as }n\to\infty,
\end{equation}
where the term $o(\log n)$ can be replaced by $O(1)$ if and only if $\E\xi^{2}\log^{+} |\xi|<\infty$. 

\vspace{.1cm}\noindent
Furthermore, if $\E |\xi|^{p} < \infty$ for some $p \in [2, 3)$, then 
\begin{equation}\label{eq:prop_variance_poly_asymp}
\Var \minlength_{n}\ =\ o(n^{3-p})  \quad\text{as }n\to\infty
\end{equation}
and also
\begin{equation}\label{eq:prop_variance_large}
\Var \minlength_{n}\ \ge\ 0.02 n^3 \P (|\xi| \ge 2n) + O(1) \quad\text{as }n\to\infty.
\end{equation}

\vspace{.1cm}\noindent
Finally, if $\E |\xi|^3<\infty$, then
\begin{equation}\label{eq:prop_variance_log_asymp}
\Var \minlength_{n}\ \simeq\ \frac{3\sigma^{4}}{4}\log n\quad\text{as }n\to\infty.
\end{equation}
\end{theorem}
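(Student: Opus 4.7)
The plan is to base everything on the Abramson--Pitman type representation underlying \eqref{eq:basic_representation2}:
$$L^\smile_n - n \,\stackrel{d}{=}\, \sum_{k=1}^n\sum_{i=1}^{K_{k,n}} h_k\bigl(S_k^{(k,i)}\bigr), \qquad h_k(x) := \sqrt{k^2+x^2}-k,$$
where $K_{k,n}$ is the number of $k$-cycles of a uniform random permutation on $[n]$ (so $\E K_{k,n}=1/k$), and the $S_k^{(k,i)}$ are i.i.d.\ copies of $S_k$, independent of the permutation. Conditioning on the permutation immediately gives
$$\E L^\smile_n - n \,=\, \sum_{k=1}^n \frac{\mu_k}{k}, \qquad \Var L^\smile_n \,=\, \sum_{k=1}^n \frac{v_k}{k} \,+\, \Var\!\left(\sum_{k=1}^n K_{k,n}\mu_k\right),$$
with $\mu_k := \E h_k(S_k)$ and $v_k := \Var h_k(S_k)$. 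A key algebraic identity, obtained by squaring $h_k(x)+k=\sqrt{k^2+x^2}$, reads
$$h_k(x)^2 \,=\, x^2 - 2k\,h_k(x), \qquad\text{whence}\qquad v_k \,=\, k\sigma^2 - 2k\mu_k - \mu_k^2,$$
which reduces \emph{every} estimate for $v_k$ to an estimate for $\mu_k$.

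For \eqref{eq:prop_first_moment} I would combine the pointwise sandwich $\tfrac{x^2}{2k}-\tfrac{x^4}{8k^3}\le h_k(x)\le \tfrac{x^2}{2k}$ on $|x|\le k$ with the trivial bound $h_k(x)\le|x|$ on $|x|>k$. Truncating $S_k$ at scale $\sqrt{k}$ and using $\E S_k^2 = k\sigma^2$ gives $\mu_k = \sigma_k^2/2 + \varepsilon_k$ with an error whose summability against $1/k$ is controlled by $\E[\xi^2\ind_{|\xi|>\sqrt{k}}]$ together with a $\sigma^4/k$ contribution. Summation then produces $\E L^\smile_n - n = \tfrac12\sum_{k=1}^n \sigma_k^2/k + O(1)$, and the classical equivalence $\sum_{k\ge 1}k^{-1}(\sigma^2-\sigma_k^2)<\infty \iff \E\xi^2\log^+|\xi|<\infty$ delivers the stated dichotomy between $o(\log n)$ and $O(1)$.

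For the logarithmic asymptotic \eqref{eq:prop_variance_log_asymp} under $\E|\xi|^3<\infty$, the algebraic identity reduces the task to the refined expansion
$$\mu_k \,=\, \frac{\sigma^2}{2} \,-\, \frac{3\sigma^4}{8k} \,+\, o\!\left(\frac{1}{k}\right).$$
Starting from the exact formula $\tfrac{x^2}{2k}-h_k(x) = \tfrac{x^2\,h_k(x)}{2k(k+\sqrt{k^2+x^2})}$, I would replace $\xi$ by its truncation at level $k^{1/2-\delta}$ for small $\delta>0$, compute the main term using $\E[(S_k^{\mathrm{trunc}})^4]\sim 3k^2\sigma^4$, and control the truncation remainder via $\E|\xi|^3<\infty$. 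Substituting in $v_k = k\sigma^2 - 2k\mu_k - \mu_k^2$ gives $v_k/k = \sigma^4/(2k)+o(1/k)$, hence $\sum_k v_k/k\sim(\sigma^4/2)\log n$. For the permutation-driven term, writing $\mu_k = \sigma^2/2 + \delta_k$ with $\delta_k = O(1/k)$, Goncharov's CLT $\Var C_n\sim\log n$ for the total cycle count $C_n := \sum_k K_{k,n}$, combined with the standard covariance structure of $(K_{k,n})$ (Arratia--Barbour--Tavar\'e), shows that $\Var(\sum_k K_{k,n}\mu_k)\sim(\sigma^4/4)\log n$; adding the two pieces yields $(3\sigma^4/4)\log n$.

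The bound \eqref{eq:prop_variance_poly_asymp} is obtained from the same scheme, now using Marcinkiewicz--Zygmund's inequality $\E|S_k|^p = O(k^{p/2})$ for $p\in[2,3)$: splitting $h_k(x)^2 \le x^4/(4k^2)\wedge x^2$ across $\{|S_k|\le k\}$ and $\{|S_k|>k\}$ with a dominated-convergence refinement yields $v_k = o(k^{2-p/2})$, while the permutation contribution remains $O(\log n)$, both being $o(n^{3-p})$. For the lower bound \eqref{eq:prop_variance_large} I consider the pairwise disjoint events $A_j^\pm := \{\pm\xi_j\ge 2n\}\cap\{|\xi_i|<2n\ \forall i\ne j\}$, $j=1,\dots,n$: on $A_j^-$ the vertex $(j,S_j)$ is necessarily a vertex of the convex minorant, and by \eqref{eq:minlength=majlength in law} the analogous calculation with $\majlength_n$ applies on $A_j^+$. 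Either way the minorant/majorant contains a $V$-shaped pair of segments of total length at least $\sqrt{j^2+\xi_j^2}+\sqrt{(n-j)^2+\xi_j^2}\ge 4n$, while $\E L^\smile_n = n + O(\log n)$, so the centred excess is $\ge 2n$ on each such event; disjointness together with $\max(\P(\xi\ge 2n),\P(\xi\le -2n))\ge \tfrac12\P(|\xi|\ge 2n)$ gives $\Var L^\smile_n \ge 4n^2\cdot(n/4)\P(|\xi|\ge 2n)+O(1)$, which a fortiori yields the stated constant $0.02$. The main technical obstacle is the refined expansion of $\mu_k$ in the logarithmic regime: since $\E\xi^4$ may be infinite, the naive fourth-order Taylor expansion fails, and the truncation level must be chosen carefully to preserve the second-order coefficient $-3\sigma^4/(8k)$ while keeping the truncation remainder controlled by $\E|\xi|^3<\infty$.
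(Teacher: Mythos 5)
Your skeleton is sound: the law-of-total-variance decomposition $\Var \minlength_n=\sum_k v_k/k+\Var(\sum_k K_{k,n}\mu_k)$, the identity $h_k(x)^2=x^2-2k\,h_k(x)$ (the paper's function $g$ in disguise), and the split $\tfrac{\sigma^4}{2}+\tfrac{\sigma^4}{4}$ for \eqref{eq:prop_variance_log_asymp} all match what the paper does, just packaged differently. But three of your steps have genuine gaps. First, the intermediate claim $\E \minlength_n-n=\tfrac12\sum_{k\le n}\sigma_k^2/k+O(1)$ is \emph{false} without the log-moment assumption: one has $\mu_k-\sigma_k^2/2=\tfrac12\E(\xi^2\1_{\{|\xi|>\sqrt k\}})+\tfrac12(\E\xi\1_{\{|\xi|\le\sqrt k\}})^2-k\E g(|S_k|/k)$, and the two truncations live at different scales ($\sqrt k$ versus $k$); for a symmetric law with $\P\{|\xi|>t\}\sim c\,t^{-2}(\log t)^{-2}$ the first term is $\sim 2c/\log k$ while $k\E g(|S_k|/k)\sim c/\log k$, so the discrepancy is $\asymp 1/\log k$ and $\sum_k k^{-1}|\mu_k-\sigma_k^2/2|=\infty$. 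Your error bound ``controlled by $\E(\xi^2\1_{\{|\xi|>\sqrt k\}})$'' only sums against $1/k$ when $\E\xi^2\log^+|\xi|<\infty$, i.e.\ exactly under the hypothesis in question, and in any case it is an upper bound only: the ``only if'' direction of \eqref{eq:prop_first_moment} needs a matching \emph{lower} bound showing the correction diverges, which is what the two-sided estimate \eqref{eq:Pruss} supplies in the paper. Second, for \eqref{eq:prop_variance_poly_asymp} your Marcinkiewicz--Zygmund bound $\E|S_k|^p=O(k^{p/2})$ only yields $\E\eta_k^2=O(k^{2-p/2})$, hence $\Var\minlength_n=O(n^{2-p/2})$, and $2-p/2>3-p$ for every $p>2$; no dominated-convergence refinement fixes the exponent. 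The correct order $o(k^{3-p})$ of $\E(S_k^2\1_{\{|S_k|\ge\varepsilon k\}})$ and $k^{-2}\E(S_k^4\1_{\{|S_k|<\varepsilon k\}})$ reflects the single-big-jump mechanism and requires a nonuniform Berry--Esseen estimate (as in the paper) or a Fuk--Nagaev/truncation argument, not a plain $p$-th moment bound.

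Third, the geometric claim behind \eqref{eq:prop_variance_large} fails: on $A_j^-=\{\xi_j\le-2n\}\cap\{|\xi_i|<2n\ \forall i\ne j\}$ the point $(j,S_j)$ need \emph{not} lie on the convex minorant, because the other $n-1$ increments, though individually below $2n$, can accumulate a drift of order $n^2$ that swamps the single jump. Concretely, for $n=3$ and $(\xi_1,\xi_2,\xi_3)=(5,-6,-5)$ one has $\xi_2\le -2n$ and $|\xi_1|,|\xi_3|<2n$, yet the minorant of $(0,0),(1,5),(2,-1),(3,-6)$ is the single chord from $(0,0)$ to $(3,-6)$, of length $\sqrt{45}<4n$, so both the vertex claim and the length bound $\ge\sqrt{j^2+\xi_j^2}+\sqrt{(n-j)^2+\xi_j^2}$ break down. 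To make the event force a long minorant you must also constrain the sum of the remaining increments (e.g.\ $|S_{j-1}|<j$), which is precisely the content of the Erd\H{o}s/Cai--Zhu inequality $\P\{|S_j|\ge j\}\ge j\rho_j\P\{|\xi|\ge 2j\}$ used in the paper; your use of \eqref{eq:minlength=majlength in law} to trade positive jumps for the majorant is fine, but the event itself needs this extra control. Finally, a small point: since the weights $\mu_k$ are not constant, the asymptotics $\Var(\sum_k K_{k,n}\mu_k)\sim\tfrac{\sigma^4}{4}\log n$ should be justified from the exact covariance structure of the cycle counts (Arratia--Barbour--Tavar\'e), not from Goncharov's CLT alone; this is routine, and it is also where one sees that the cross-covariance corrections are $O(1)$, as in the paper's Lemma on moments.
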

\begin{corollary} \label{cor:variance}
If $\E\xi=0$ and $\E|\xi|^{p}<\infty$, then $\Var L_{n}=o(n^{3-p})$ for $p \in [2,3)$ and $\Var L_{n}=O(\log n)$ for $p \ge 3$, as $n\to \infty$.
\end{corollary}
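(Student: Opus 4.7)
The plan is to derive the corollary directly from Theorem \ref{thm:moments} via the decomposition $L_n=\minlength_n+\majlength_n$ recorded in Remark \ref{rem:non-joint}, without needing any additional probabilistic input. The key observation is that the Cauchy--Schwarz inequality applied to the covariance term yields
$$
\Var L_n\ =\ \Var \minlength_n + 2\Cov(\minlength_n,\majlength_n) + \Var \majlength_n\ \le\ \left(\sqrt{\Var \minlength_n}+\sqrt{\Var \majlength_n}\right)^2,
$$
and the distributional symmetry \eqref{eq:minlength=majlength in law} forces $\Var \majlength_n=\Var \minlength_n$. Consequently $\Var L_n\le 4\Var \minlength_n$, which reduces the problem to estimating the variance of the minorant alone.

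For $p\in[2,3)$, the bound \eqref{eq:prop_variance_poly_asymp} in Theorem \ref{thm:moments} gives $\Var \minlength_n=o(n^{3-p})$, so $\Var L_n=o(n^{3-p})$. For $p\ge 3$, the hypothesis $\E|\xi|^p<\infty$ implies in particular $\E|\xi|^3<\infty$, so the logarithmic asymptotic \eqref{eq:prop_variance_log_asymp} applies and yields $\Var \minlength_n\simeq \tfrac{3\sigma^4}{4}\log n$, hence $\Var L_n=O(\log n)$. There is no substantive obstacle: the entire corollary amounts to a two-line deduction from Theorem \ref{thm:moments} combined with the identity in law between the lengths of the minorant and majorant; no independent handling of the joint law of $(\minlength_n,\majlength_n)$ is required, which is consistent with the fact (noted in Remark \ref{rem:non-joint}) that their joint convergence remains open.
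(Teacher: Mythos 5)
Your proposal is correct and is exactly the intended deduction: the paper leaves the corollary without an explicit proof precisely because it follows from Theorem \ref{thm:moments} via $L_{n}=\minlength_{n}+\majlength_{n}$, the Cauchy--Schwarz bound $\Var L_{n}\le\bigl(\sqrt{\Var\minlength_{n}}+\sqrt{\Var\majlength_{n}}\bigr)^{2}=4\Var\minlength_{n}$ (using \eqref{eq:minlength=majlength in law}), and then \eqref{eq:prop_variance_poly_asymp} for $p\in[2,3)$ and \eqref{eq:prop_variance_log_asymp} for $p\ge 3$. Your remark that no control of the joint law of $(\minlength_{n},\majlength_{n})$ is needed for the upper bound is also the right point to make.
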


The asymptotics of $\E \minlength_{n}$ in  \eqref{eq:prop_first_moment} have been known before and follow, for example, from Theorem~1.8 in~\cite{McRedmond+Wade:2018} and the fact that $\minlength_{n} \od \majlength_{n}$. The main results here are about the asymptotic behavior of $\Var \minlength_{n}$ and the corollary on the behavior of $\Var L_{n}$. A recent weaker result, Theorem~6.2.6 in~\cite{McRedmond:Thesis}, asserts that $\Var L_{n}$ grows sub-polynomially if $\xi$ is centered and bounded.

\vspace{.1cm}
In view of our discussion about the joint law of $(\minlength_{n},\majlength_{n})$ in Remark~\ref{rem:non-joint}, it is not clear if, under the assumption $\E \xi^{2} < \infty$,  \eqref{eq:prop_variance_log_asymp} could lead to the stronger result $\Var L_{n} \simeq c \log n$ for some constant $c>0$ as suggested by Conjecture 1.13 in~\cite{McRedmond+Wade:2018}. However, by \eqref{eq:prop_variance_large},  $\Var \minlength_{n}$ has non-logarithmic behavior when $\E |\xi|^{3}=\infty$, and the same is very plausible for $\Var L_{n}$.

\begin{remark}\rm
In view of the functional convergence~\eqref{eq:clt_for_rw}, it is natural to ask whether the above theorems can be obtained by applying the continuous mapping theorem to the functional $F^{\smile}\!:D[0,1] \to (0,\infty)$ which assigns to each function $f$ in $D[0,1]$ the length of its convex minorant. An approach of this kind has been applied in~\cite{Lo+McRedmond+Wallace},\cite{McRedmond+Wade:2018},\cite{Wade+Xu:2015a} 
to various functionals of convex hulls of multidimensional random walks. As we are interested in the \emph{graph} of a one-dimensional random walk, this functional limit approach cannot be used here because time and space are scaled by different sequences in~\eqref{eq:clt_for_rw}. In fact, in the cases (A) and (B), the scaling in time (which is $n$) is stronger than the scaling in space (which is $a_{n}$ and thus regularly varying with index $1/\alpha$, with $\alpha=2$ in case (A)), whereas in case (C) the scaling in space is the stronger one.

\vspace{.1cm}
There is just one ``critical'' case where the two scalings coincide and the functional limit approach does work. Assume that the law of $\xi$ is such that
\begin{equation}\label{eq:clt_for_rw_alpha_{1}}
\left(\frac{S(nt)}{a_{n}}\right)_{t\in [0,1]}\ \xRightarrow{n\to\infty}\ \left(\mathcal{S}_{1}(t)\right)_{t\in [0,1]}
\end{equation}
in the Skorokhod space $D[0,1]$ with the standard $J_{1}$-topology, where $a_{n}/n \to c\in (0,\infty)$ and $(\mathcal{S}_{1}(t))_{t\in[0,1]}$ is the standard symmetric Cauchy process. Since the functional $F^{\smile}$ is continuous on a set of measure $1$ with respect to the law of the Cauchy process, the continuous mapping theorem implies that
$$
\frac {\minlength_{n}}{n}  \todistr \minlength_\infty(c),
$$
where $\minlength_\infty(c)$  is the length of the convex minorant of the Cauchy process $(c\,\mathcal{S}_{1}(t))_{t\in [0,1]}$. Note that the above argument does not completely cover the domain of attraction of the symmetric Cauchy distribution. Even if we assume that there is no centering, the sequence $a_{n}$ in~\eqref{eq:clt_for_rw_alpha_{1}} is in general of the form $a_{n} = n \ell(n)$ with some slowly varying function $\ell$.

\end{remark}


\section{Proofs explained} \label{sec:uniform_perm}

\subsection{Connection with uniform random permutations}\label{subsec:uniform_perm}
Our approach relies crucially on the following representation of the convex minorant of a random walk, observed already in 1950th by Sparre Andersen~\cite{Sparre Andersen:1954}. The version presented below is borrowed from \cite[Thms.~1 and 2]{Abramson+Pitman+Ross+Bravo:2011}, see also \cite[Thm.~1.1]{Abramson+Pitman:2011}, and valid under the assumption that the law of the increment $\xi$ is continuous.

\vspace{.1cm}
Set $[n]:=\{1,2,\ldots,n\}$ and let $\Pi_{n}$ be a permutation of $[n]$ picked uniformly at random from the symmetric group $\mathfrak{S}_{n}$, that is
$$
\P\{\Pi_{n}=\pi\}\,=\,\frac{1}{n!},\quad \pi \in\mathfrak{S}_{n}.
$$
Denote by $Z_{n,1},Z_{n,2},\ldots, Z_{n,K_{n}}$ the nonincreasingly ranked cycle lengths of $\Pi_{n}$, with $K_{n}$ being the total number of cycles. The convex minorant $t\mapsto  \minorant_{n}(t)$ being a piecewise linear function, let  $F_{n}$ denote the number of intervals where it is linear. Denote by $C_{n,1},\ldots,C_{n,F_{n}}$ the nonincreasingly ordered lengths of these intervals (on the horizontal axis).
Then the basic result we shall rely on states that
$$
(F_{n}, C_{n,1},\ldots,C_{n,F_{n}},0,0,\ldots)\ \od\ (K_{n}, Z_{n,1},Z_{n,2},\ldots, Z_{n,K_{n}},0,0,\ldots).
$$
Furthermore, given $(F_{n}, C_{n,1},\ldots,C_{n,F_{n}})$, the increments of the convex minorant over the linearity intervals are conditionally independent and the conditional law of any such increment over an interval of length $\ell$ equals the law of $S_{\ell}$. In what follows, we formally put $Z_{n,k}:=0$ for $k> K_{n}$.


\vspace{.1cm}
For $j\in [n]$, let $K_{n,j}$ be the number of cycles of length $j$ in $\Pi_{n}$, that is
$$
K_{n,j}\,:=\,\#\{k:Z_{n,k}=j\},\quad j=1,\ldots,n.
$$
Note that
$$
\quad \sum_{k=1}^{n}Z_{n,k}\,=\,n
\quad
K_{n}\,=\,\sum_{j=1}^{n}K_{n,j},
\quad\text{and}\quad
\sum_{j=1}^{n}jK_{n,j}\,=\,n.
$$
From the above observations, we immediately derive two equivalent distributional representations for the length of the convex minorant, namely
\begin{equation}\label{eq:basic_representation1}
\minlength_{n}-n\ \od\ \sum_{k=1}^{n}\left(\sqrt{Z_{n,k}^{2}+S^{2}_{k,Z_{n,k}}}-Z_{n,k}\right),
\end{equation}
and
\begin{equation}\label{eq:basic_representation2}
\minlength_{n}-n\ \od\ \sum_{j=1}^{n}\sum_{i=1}^{K_{n,j}}\left(\sqrt{j^{2}+S^{2}_{i,j}}-j\right),
\end{equation}
where the $S_{i,j}$ for $i \in \N$ and $j \in \N_{0}$ are independent random variables that are also independent of $(Z_{n,k})_{n,k \in \N}$ and satisfy $S_{i,j}\od S_{j}$ for all $i$ and $j$. Note that the summand $-n$ on the left-hand side (matched by the summands $-Z_{n,k}$ and $-j$, respectively, on the right-hand sides) corresponds to the length of the horizontal interval $[0,n]$ and should be viewed as a very rough first order approximation to the total length $\minlength_{n}$ in the cases (A) and~(B).

\vspace{.1cm}
The following smoothing argument shows that \eqref{eq:basic_representation1} and~\eqref{eq:basic_representation2} do not require that the random walk has continuous increment law. In other words:

\begin{center}
{\it The representations~\eqref{eq:basic_representation1} and~\eqref{eq:basic_representation2} remain valid without the continuity assumption.}
\end{center}

Fixing any $n\in\N$, consider the random walk $S_{k;\eps} := \xi_{\eps,1}+\cdots+\xi_{\eps,1}$ for $1\le k\le n$ and any $\eps>0$, where $\xi_{\eps,k}:= \xi_{k} +\eps N_{k}$ and $(N_{k})_{k\in\N}$ are i.i.d.\ standard normal random variables independent of $(\xi_{k})_{k\in\N}$. Let $(S_{\eps}(t))_{t\in [0,n]}$ be its linear interpolation, defined the same way as $S(t)$ above. The distribution of $\xi_{\eps,1}$ is continuous, hence the representations~\eqref{eq:basic_representation1} and~\eqref{eq:basic_representation2} hold for $\minlength_{\eps,n}$, the length of the convex minorant of $(S_{\eps}(t))_{t\in [0,n]}$. As $\eps\downarrow 0$, the process $(S_\eps(t))_{t\in [0,n]}$ converges to $(S(t))_{t\in [0,n]}$ weakly in the space $C[0,n]$, and since the functional assigning to each continuous function the length of its convex minorant is continuous on $C[0,n]$ (by the Cauchy--Crofton formula), the continuous mapping theorem implies that $\minlength_{n;\eps}$ converges in distribution to $\minlength_{n}$, as $\eps\downarrow 0$. Finally, the claim follows because the right-hand sides of~\eqref{eq:basic_representation1} and~\eqref{eq:basic_representation2}  for $(S_\eps(t))_{t\in [0,n]}$ converge, as $\eps\downarrow 0$, to the corresponding expressions for $(S(t))_{t\in [0,n]}$.

\subsection{Explanation of the proofs in the cases (A) and (B)} \label{subsec:explanation}
The structure of uniform random permutations is well understood. In particular, see Theorem 1.3. in \cite{Arratia+Barbour+Tavare:2003}, it is known that
\begin{equation}\label{eq:cycles_convergence1}
\left(K_{n,1},K_{n,2},\ldots,K_{n,n},0,0,\ldots\right)\ \todistr\ (P_{1},P_{2},P_3,\ldots),
\end{equation}
where the $P_{j}$ are mutually independent and the law of $P_{j}$ is Poisson with mean $1/j$. Moreover, the convergence is fast: there exists a coupling (called the {\it Feller coupling}) such that
\begin{equation}\label{eq:L_{1}_conv_marginal}
\E |K_{n,j}-P_{j}|\ \le\ \frac{2}{n+1},\quad j=1,\ldots,n,
\end{equation}
and thus
\begin{equation}\label{eq:L_{1}_conv_{j}oint}
\E \sum_{j=1}^{n} |K_{n,j}-P_{j}|\ <\ 2,
\end{equation}
see \cite[Remark on p.~18 and Eq.~(1.26)]{Arratia+Barbour+Tavare:2003}. This coupling will be crucial for the proof in case (A).  Put
$$
Y_{j}\,:=\,\sum_{i=1}^{P_{j}}\left(\sqrt{j^{2}+S^{2}_{i,j}}-j\right)
$$
for $j\in\N$. Formula \eqref{eq:L_{1}_conv_{j}oint} in conjunction with representation \eqref{eq:basic_representation2} strongly suggests that the asymptotic behavior of $\minlength_{n}-n$ should be well approximated by that of the sum
$$
V_{n}\,:=\,\sum_{j=1}^{n}Y_{j},
$$
which is simply a partial sum of independent (but not identically distributed) random variables. The corresponding limit laws are usually called distributions of class $L$, but in our case (A) the limit turns out to be normal.

\vspace{.1cm}
Below we will prove Theorem \ref{thm:finite_variance} by showing that the distributions of normalized random variables $\minlength_{n}-n$ and $V_{n}$ are asymptotically close, and then checking the classical conditions for convergence in distribution of the $V_{n}$ after suitable normalization, which are the row sums of triangular arrays whose rows consist of independent random variables.

An interesting observation is that the above argument, based on replacing $K_{n,j}$ by $P_{j}$ in representation \eqref{eq:basic_representation2}, fails to work in the cases (B) and (C). In order to heuristically explain our approach in case (B), we recall another classical fact from the theory of random permutations, namely (see Vershik and Schmidt \cite{Vershik+Schmidt:1977} or Kingman \cite{Kingman:1977})
\begin{equation}\label{eq:cycles_convergence2}
\left(\frac{Z_{n,1}}{n},\frac{Z_{n,2}}{n},\ldots\right)\ \todistr\ \left(Z_{1},Z_{2},\ldots\right),
\end{equation}
where the random sequence $(Z_{1},Z_{2},\ldots)$ has a Poisson--Dirichlet distribution with parameter $\theta=1$. Furthermore, there exists a coupling such that
\begin{equation}\label{eq:L_{1}_conv_{j}oint_large_cycles}
\E\left(\sum_{k=1}^{\infty}\left|\frac{Z_{n,k}}{n}-Z_{k}\right|\right)\ \simeq\ \frac{\log n}{4n}\quad\text{as }n\to\infty,
\end{equation}
see \cite[Theorem 8.10]{Arratia+Barbour+Tavare:2006}. Put $b_{n}:=a_{n}^{2}/n$ and note that regular variation of $(a_{n})_{n\in\N}$ with index $\frac{1}{\alpha}$ implies regular variation of the sequence $(b_{n})_{n\in\N}$ with index $\frac{2}{\alpha}-1>0$. In particular, $b_{n}\to\infty$ if $\alpha<2$. Recalling relation \eqref{eq:basic_representation1}, we can argue heuristically as follows:
\begin{align*}
\frac{\minlength_{n}-n}{b_{n}}\ &\od\ \frac{n}{a_{n}^{2}}\sum_{k=1}^{n}\left(\sqrt{Z_{n,k}^{2}+S_{k,Z_{n,k}}^{2}}-Z_{n,k}\right)\\
&=\ \frac{n}{a_{n}^{2}}\sum_{k=1}^{n}Z_{n,k}\left(\sqrt{1+\frac{S_{k,Z_{n,k}}^{2}}{Z_{n,k}^{2}}}-1\right)\\
&\approx\ \frac{n}{a_{n}^{2}}\sum_{k=1}^{n}\frac{S_{k,Z_{n,k}}^{2}}{2Z_{n,k}}\\
&\approx\ \frac{1}{a_{n}^{2}}\sum_{k=1}^{n}\frac{S_{k,[nZ_{k}]}^{2}}{2Z_{k}}\ \todistr\ \frac{1}{2}\sum_{k=1}^{\infty}\frac{(\mathcal{S}_{\alpha}^{(k)}(Z_{k}))^{2}}{Z_{k}},
\end{align*}
where the first approximation stems from the one-term Taylor expansion, the second is a consequence of \eqref{eq:L_{1}_conv_{j}oint_large_cycles} and the asserted convergence follows from \eqref{eq:clt_for_rw}, the $\mathcal{S}_{\alpha}^{(k)}$ being independent copies of $\mathcal{S}_{\alpha}$ which are also independent of $(Z_{1},Z_{2},\ldots)$. Proposition \ref{prop:limit_is_finite} below will show that the series in the last line is a.s.~finite. Moreover, the above heuristic turns out to be correct and this will provide the proof of Theorem~\ref{thm:infinite_variance}.

\section{Proofs}

\subsection{Proof of Theorem \ref{thm:finite_variance}}

Let $\tminlength_{n}$ denote the random variable on the right-hand side of \eqref{eq:basic_representation2} increased by~$n$ so that $\tminlength_{n} \od \minlength_{n} $. 

As discussed in Section~\ref{subsec:explanation}, the first step of the proof is to show that
\begin{equation}\label{eq:thm1_proof1}
(\tminlength_{n}-n-V_{n})/\sqrt{\log n}\ \toprobab\ 0.
\end{equation}
We have
\begin{equation}\label{eq:lem_ui_proof00}
\sqrt{j^{2}+S_{1,j}^{2}}-j\ =\ \frac{S_{1,j}^{2}}{j+\sqrt{j^{2}+S_{1,j}^{2}}}\ \le\ \frac{S_{1,j}^{2}}{2j},
\end{equation}
in particular
\begin{equation}\label{eq:first_moment_estimate1}
\E \left(\sqrt{j^{2}+S_{1,j}^{2}}-j\right)\ \le\ \frac{\E S_{1,j}^{2}}{2j}\ =\ \frac{\sigma^{2}}{2}.
\end{equation}
Using the definition of $V_{n}$, this entails
\begin{align*}
\E \left|\tminlength_{n}-n-V_{n}\right|\ &\le\ \sum_{j=1}^{n}\E\left|\sum_{i=1}^{K_{n,j}}\left(\sqrt{j^{2}+S^{2}_{i,j}}-j\right)-\sum_{i=1}^{P_{j}}\left(\sqrt{j^{2}+S^{2}_{i,j}}-j\right) \right|\\
&=\ \sum_{j=1}^{n}\E |K_{n,j}-P_{j}|\,\E \left(\sqrt{j^{2}+S_{j}^{2}}-j\right)\ \le\ \sigma^{2}\ <\ \infty,
\end{align*}
where the penultimate inequality follows from  \eqref{eq:L_{1}_conv_{j}oint} and \eqref{eq:first_moment_estimate1}. Now \eqref{eq:thm1_proof1} follows from the last line and the Markov inequality.

\vspace{.1cm}
The second step is to simplify $V_{n}$. Put
\begin{equation} \label{eq:W_{n} definition}
W_{n}\ :=\ \sum_{j=1}^{n}\left(\sqrt{j^{2}+S_{1,j}^{2}}-j\right)\ind_{\{P_{j}=1\}},\quad n\in\N.
\end{equation}
Observe that
$$
\sum_{j=1}^{\infty}\ind_{\{P_{j}\ge 2\}}<\infty\quad\text{a.s.}
$$
by the Borel--Cantelli lemma. Then
\begin{align*} 
0\ \le\ V_{n}-W_{n}\ &=\ \sum_{j=1}^{n}\sum_{i=1}^{P_{j}}\left(\sqrt{j^{2}+S^{2}_{i,j}}-j\right)\ind_{\{P_{j}\ge 2\}} \notag\\
&\le\ \sum_{j=1}^{\infty}\sum_{i=1}^{P_{j}}\left(\sqrt{j^{2}+S^{2}_{i,j}}-j\right)\ind_{\{P_{j}\ge 2\}}\ <\ \infty\quad\text{a.s.},
\end{align*}
which implies that $(V_{n}-W_{n})_{n\in\N}$ is bounded in probability. Hence, by \eqref{eq:thm1_proof1}, it suffices to prove the theorem for $W_{n}$ instead of $\minlength_{n} - n$.

\vspace{.1cm}
The third step is to simplify $W_{n}$. Let us rewrite it as
\begin{align}  \label{eq:W_{n}_Taylor}
W_{n}\ &=\ \sum_{j=1}^{n}j\left(\sqrt{1+\frac{S_{1,j}^{2}}{j^{2}}}-1\right)\ind_{\{P_{j}=1\}} \notag\\
&=\ \sum_{j=1}^{n}j\left(\frac{S_{1,j}^{2}}{2j^{2}}- \frac{S_{1,j}^{4}}{8j^{4}} \cdot \theta \left(\frac{S_{1,j}^{2}}{j^{2}}\right)\hspace{-1.2mm}\right)\ind_{\{P_{j}=1\}},
\end{align}
where $\theta:[0,\infty) \to [0,1]$ is a continuous bounded function resulting from Lagrange's form of the remainder in the Taylor expansion of $x\mapsto \sqrt{1+x}$ at $x=0$.
We claim that
\begin{equation}\label{eq:finite_variance_remainder_{n}egligible}
\frac{1}{\sqrt{\log n}}\sum_{j=1}^{n}\frac{S_{1,j}^{4}}{j^3}\ind_{\{P_{j}=1\}}\ \toprobab\ 0,
\end{equation}
for which it obviously suffices to verify that
\begin{equation}\label{eq:finite_variance_remainder_{n}egligible_aux}
\sum_{j=1}^{\infty}\frac{S_{1,j}^{4}}{j^3}\ind_{\{P_{j}=1\}}\ <\ \infty \quad \text{a.s}.
\end{equation}
Put $p_{j}:=\P\{P_{j}=1\}$ and note that
\begin{equation}\label{eq:P_{j}_asymp}
\frac{1}{j}\ \ge\ p_{j}\ =\ e^{-1/j}\frac{1}{j}\ =\ \frac{1}{j}+O\left(\frac{1}{j^{2}}\right)\quad\text{as }j\to\infty.
\end{equation}

As $\E\xi=0$ and $\sigma^{2}=\E\xi^{2}<\infty$, it follows by \cite[Thm.~10.2 on p.~46]{Gut:2009} (or \cite[Thm.~4]{Davis:1968}) that
\begin{equation}\label{eq:LIL_convergence_rate}
\sum_{j=1}^\infty \frac 1j \P\left\{|S_{j}| > a\sigma\sqrt {j\log \log j}\right\}\ <\ \infty
\end{equation}
for any $a> \sqrt 2$. By combining this and the Borel--Cantelli lemma, only finitely many events $\{|S_{1,j}| > 2\sigma\sqrt {j\log \log j}, P_{j}=1\}$, $j\in\N$, occur with probability~$1$. 
Now the proof of~\eqref{eq:finite_variance_remainder_{n}egligible_aux}, which in turn implies~\eqref{eq:finite_variance_remainder_{n}egligible}, can be completed by using 
the inequality
\[
\sum_{j=1}^{\infty}\frac{S_{1,j}^{4}}{j^3}\ind_{\{P_{j}=1\}}\ \le\
\sum_{j=1}^{\infty}\frac{S_{1,j}^{4}}{j^3}\ind_{\{|S_{1,j}| > 2\sigma\sqrt {j\log \log j}, P_{j}=1\}}\ +\ \sum_{j=1}^{\infty}\frac{16\sigma^{4}}{j} (\log \log j)^{2}\ind_{\{P_{j}=1\}}.
\]
The first sum on the right-hand side is finite a.s.\ since it contains a.s.\ only finitely many non-zero terms, and the second sum is finite a.s.\ because it has finite expectation.


In view of \eqref{eq:W_{n}_Taylor} and \eqref{eq:finite_variance_remainder_{n}egligible}, it suffices to prove Theorem \ref{thm:finite_variance} with $\minlength_{n}-n$ replaced by
$$
W_{n}'\ :=\ \sum_{j=1}^{n}\frac{S_{1,j}^{2}}{2j}\ind_{\{P_{j}=1\}}.
$$
Therefore, as the fourth step of the proof, we will show that
\begin{equation}\label{eq:W_{n}_prime_CLT}
\frac{1}{\sqrt{\log n}}\left(W_{n}' - \sum_{j=1}^{n}\frac{\sigma_{j}^{2}}{2j}\right)\ \todistr\ \mathcal{N}\left(0,\frac{3\sigma^{4}}{4}\right).
\end{equation}
Even though the variables $W_{n}'$ are sums of independent random variables of a rather simple structure, it is not easy to obtain a central limit theorem for $W_{n}'$ without additional moment assumptions. For example, when trying to verify the Lindeberg condition, the fourth moment of $S_{1,j}$ appears, which is not assumed to be finite here. 

We use a general result \cite[Thm.~18 in Chap.~IV, $\S$4]{Petrov:1975}, which ensures convergence of row sums in triangular arrays of random variables to a normal distribution. According to the three conditions of the theorem, we need to check that
\begin{equation}\label{eq:infinitely_small}
\lim_{n\to\infty}\sum_{j=1}^{n}\P\left\{\frac{S_{1,j}^{2}}{2j}\ind_{\{P_{j}=1\}}\ge \varepsilon\sqrt{\log n}\right\}\ =\ 0 \quad \text{for every } \varepsilon >0,
\end{equation}
\begin{equation} \label{eq:variance_converges}
\lim_{n\to\infty} \frac{1}{\log n} \sum_{j=1}^{n} \Var \left(\frac{S_{1,j}^{2}}{2j}\ind_{\{P_{j}=1,S_{1,j}^{2}/(2j) < \sqrt{\log n}\}}\right) =\frac{3\sigma^{4}}{4},
\end{equation}
and
\begin{equation}\label{eq:finite_variance_centering}
\lim_{n\to\infty}\frac{1}{\sqrt{\log n}}\left[\sum_{j=1}^{n} \E\left(\frac{S_{1,j}^{2}}{2j}\ind_{\left\{P_j=1, S_{1,j}^{2}/(2j)<\sqrt{\log n}\right\}}\right)-\sum_{j=1}^{n}\frac{\sigma_{j}^{2}}{2j} \right ] = 0.
\end{equation}

Our idea is to approximate $S_{1,j}/\sqrt{j}$ by $\sigma_j \mathcal{N}$, where $\mathcal N$ is a standard normal variable and $\sigma_{j}^{2}$ is given by~\eqref{eq:sigma_{n}_definition}. We claim that, for any $\gamma \ge 0$ and $\varepsilon >0$,
\begin{multline}\label{eq:main_approximation}
\lim_{n\to\infty} \frac{1}{(\log n)^{\gamma/2}} \sum_{j=1}^{n}\frac{1}{j} \left[ \E\left(\frac{|S_{1,j}|^{2\gamma}}{j^\gamma}\ind_{\left\{S_{1,j}^{2}/j<\varepsilon \sqrt{\log n}\right\}}\right) - \E \left(\sigma_{j}^{2 \gamma} |\mathcal{N}|^{2\gamma}\ind_{\left\{\sigma_j^2 \mathcal{N}^{2} < \varepsilon \sqrt{\log n}\right\}}\right)\right]\\
=\ 0.
\end{multline}

To prove this, we first note that 
\begin{equation} \label{eq:fourth_moment}
\E(X^\gamma \1_{\{X < a\}})\ =\ -\int_{[0,a)} x^\gamma  {\rm d} \P\{ X \ge x \} = -a^\gamma \P\{X \ge a \}+\gamma a^{\gamma}\int_0^1 t^{\gamma - 1} \P\{X \ge a t \}\ {\rm d} t
\end{equation}
for any non-negative random variable $X$ and $a, \gamma >0$. Upon applying this formula twice with $X= S_{1,j}^2/j$ and $X= \sigma_j^2 \mathcal{N}^2$, we obtain
\begin{multline*} 
 \frac{1}{a^{\gamma}} \bigg |\E  \bigg ( \frac{|S_{1,j}|^{2 \gamma}}{j^\gamma} \1_{\{ S_{1,j}^2/j < a\}}\bigg)\,-\,\E \big( \sigma_j^{2 \gamma } |\mathcal{N}|^{2 \gamma} \1_{\{ \sigma_j^2 \mathcal{N} <  a  \}}\big)\bigg| \\
\le\ \bigg | \P \bigg \{ \frac{S_{1,j}^2}{j} \ge a \bigg\}  - \P \big \{ \sigma_j^2\mathcal{N}^2 \ge a \big \} \bigg |\,+\, \gamma  \int_0^1 t^{ \gamma - 1} \bigg | \P \bigg \{ \frac{S_{1,j}^2}{j} \ge at  \bigg\}  - \P \big \{ \sigma_j^2\mathcal{N}^2 \ge at \big \} \bigg |\  {\rm d} t.
\end{multline*}
Then, taking $a  = \varepsilon \sqrt{\log n}$, we see that for every $j\leq n$,
\begin{multline} \label{eq:main_bound}
 \frac{1}{j (\log n)^{\gamma/2}}\left[ \E\left(\frac{|S_{1,j}|^{2\gamma}}{j^\gamma}\ind_{\left\{S_{1,j}^{2}/j<\varepsilon \sqrt{\log n}\right\}}\right) - \E \left(\sigma_{j}^{2 \gamma} |\mathcal{N}|^{2\gamma}\ind_{\left\{\sigma_j^2 \mathcal{N}^{2} < \varepsilon \sqrt{\log n}\right\}}\right)\right] \\
\le\ 4 \varepsilon^\gamma \cdot \frac{1 }{j} \sup_{x \in \R } \bigg | \P \bigg \{ \frac{S_{1,j}}{\sqrt j} < x \bigg\}  - \P \big \{ \sigma_j \mathcal{N} < x\big \}\bigg |. 
\end{multline}
Clearly, this bound is also valid for $\gamma =0$. On the other hand, the dominated convergence theorem ensures
\[
\lim_{n\to\infty}\E \left[\frac{1}{j (\log n)^{\gamma/2}}\left(\frac{|S_{1,j}|^{2\gamma}}{j^\gamma}\ind_{\left\{S_{1,j}^{2}/j<\varepsilon \sqrt{\log n}\right\}} - \sigma_{j}^{2 \gamma} |\mathcal{N}|^{2\gamma}\ind_{\left\{\sigma_j^2 \mathcal{N}^{2} < \varepsilon \sqrt{\log n}\right\}}\right)\right]\ =\ 0,
\]
for every $j\in\N$ because the random variable in square brackets is bounded by $2 \varepsilon^\gamma/j$. Since, furthermore, the sequence on the right-hand side of \eqref{eq:main_bound} is summable over $j\in\N$, see \cite[p.~1480]{Davis:1968} and~\cite[p.~130]{Petrov:1975}\footnote{This result was actually proved in~\cite{Friedman+Katz+Koopmans:1966} but stated there without uniformity in $x$.},
\eqref{eq:main_approximation} follows by another appeal to the dominated convergence theorem.

We can now finish the proof of \eqref{eq:W_{n}_prime_CLT}. Recall that $P_{j}$ is independent of $S_{1,j}$. Using \eqref{eq:P_{j}_asymp} and then \eqref{eq:main_approximation} with $\gamma=0$, we have
\begin{align*}
\lim_{n\to\infty}\sum_{j=1}^{n}\P\left\{\frac{S_{1,j}^{2}}{2j}\ind_{\{P_{j}=1\}}\ge \varepsilon\sqrt{\log n}\right\}\  
&=\ \lim_{n\to\infty}\sum_{j=1}^{n}\frac{1}{j}\P\left\{\frac{S_{1,j}^{2}}{2j}\ge \varepsilon\sqrt{\log n}\right\}\\
&=\ \lim_{n\to\infty}\sum_{j=1}^{n} \frac{1}{j} \P\left\{ \sigma_j^2 \mathcal{N}^2 \ge 2 \varepsilon\sqrt{\log n}\right\}.
\end{align*}
The last limit is equal to zero since $\lim_{j\to\infty}\sigma_j^2 \ = \ \sigma^2<\infty$ and in view of the following simple Markov-type inequality
\begin{equation} \label{eq:normal_truncated_bound}
\E\left( \sigma_j^{2\gamma} \mathcal{N}^{2 \gamma} \ind_{ \left \{ \sigma_j^2 \mathcal{N}^2 \ge a\right\}}\right)\ \le\ a^{-3} \sigma_j^{2\gamma + 6}\E \mathcal{N}^{2 \gamma +6} < \infty,
\end{equation}
which holds for all $j \in \N$, $\gamma \ge 0$, $a >0$, and which we applied with $\gamma=0$. This finishes the proof of \eqref{eq:infinitely_small}.

Furthermore, we have
\begin{align*}
&\sum_{j=1}^{n} \Var \left(\frac{S_{1,j}^{2}}{2j}\ind_{\{P_{j}=1,S_{1,j}^{2}/(2j) < \sqrt{\log n}\}}\right)\\
&\hspace{.2cm}=\ \sum_{j=1}^{n}p_{j}\,\E\left(\frac{S_{1,j}^{4}}{4j^{2}}\ind_{\left\{S_{1,j}^{2}/j< 2\sqrt{\log n}\right\}}\right)-\sum_{j=1}^{n} \left(p_{j}\,\E\left(\frac{S_{1,j}^{2}}{2j}\ind_{\left\{S_{1,j}^{2}/j< 2\sqrt{\log n}\right\}}\right)\hspace{-1.4mm}\right)^{2}.
\end{align*}
The second sum on the right-hand side is increasing in $n$ and bounded by $\sum_{j=1}^{\infty}\frac{\sigma^{4}}{4j^{2}}<
\infty$ in view of \eqref{eq:P_{j}_asymp}. Appealing once again to \eqref{eq:P_{j}_asymp} and using  \eqref{eq:main_approximation} with $\gamma = \varepsilon =2$, we obtain
\begin{align*}
&\hspace{-2cm}\lim_{n \to \infty} \frac{1}{\log n} \sum_{j=1}^{n} \Var \left(\frac{S_{1,j}^{2}}{2j}\ind_{\{P_{j}=1,S_{1,j}^{2}/(2j) < \sqrt{\log n}\}}\right)\\
&=\ \lim_{n\to\infty}\frac{1}{\log n}\sum_{j=1}^{n}p_{j}\,\E\left(\frac{S_{1,j}^{4}}{4j^{2}}\ind_{\left\{S_{1,j}^{2}/j< 2\sqrt{\log n}\right\}}\right)\\
&=\ \lim_{n\to\infty}\frac{1}{\log n}\sum_{j=1}^{n}\frac{1}{j}\,\E\left(\frac{S_{1,j}^{4}}{4j^{2}}\ind_{\left\{S_{1,j}^{2}/j< 2\sqrt{\log n}\right\}}\right)\\
&=\ \lim_{n \to \infty} \frac{1}{4 \log n}  \sum_{j=1}^{n} \frac{1}{j} \,\E\left( \sigma_j^4 \mathcal{N}^4 \ind_{ \left \{ \sigma_j^2 \mathcal{N}^2 < 2\sqrt{\log n}\right\}}\right)\\  
&=\ \lim_{n \to \infty} \frac{1}{4 \log n}  \sum_{j=1}^{n} \frac{3 \sigma_j^4 }{j}  \ = \ \frac{3 \sigma^4}{4}
\end{align*}
using that the family $(\sigma_j^4 \mathcal{N}^{4})_{j\in\N}$ is uniformly integrable by $\sigma_{j}^{2}\to \sigma^{2}$. This proves \eqref{eq:variance_converges}.

Finally, from  \eqref{eq:main_approximation} and \eqref{eq:normal_truncated_bound}  with $\gamma =1$, $ \varepsilon =2$, and $a=2 \sqrt{\log n}$, 
\begin{align*}
& \lim_{n\to\infty}\frac{1}{\sqrt{\log n}}\left[\sum_{j=1}^{n} \E\left(\frac{S_{1,j}^{2}}{2j}\ind_{\left\{P_j=1, S_{1,j}^{2}/(2j)<\sqrt{\log n}\right\}}\right)-\sum_{j=1}^{n}\frac{\sigma_{j}^{2}}{2j} \right ] \\
& = \ \lim_{n\to\infty}\frac{1}{2 \sqrt{\log n}} \sum_{j=1}^{n} \frac{1}{j} \left[ \E\left(\sigma_j^2 \mathcal{N}^2 \ind_{\left\{\sigma_j^2 \mathcal{N}^2<2\sqrt{\log n}\right\}}\right)-\sigma_{j}^{2}\right ] \ =\ 0.
\end{align*}
This proves \eqref{eq:finite_variance_centering}. The proof of~\eqref{eq:W_{n}_prime_CLT} is herewith complete. Thus, we established \eqref{eq:clt_finite_variance}.

\vspace{.1cm}
If $\E\xi^{2}\log^{+} |\xi|<\infty$, then $\sigma_{j}^{2}$  can be replaced by $\sigma^{2}$ throughout the proof, implying~\eqref{eq:clt_finite_variance2}. The key observation is that the modified sequence on the right-hand side of estimate \eqref{eq:main_bound}  remains summable over $j\in\N$ by
Lemma 1 and the Theorems on p.~1480 in \cite{Davis:1968}.  This finishes the proof of Theorem \ref{thm:finite_variance}.

\subsection{Proof of Theorem~\ref{thm:moments}}

We first give explicit formulae for $\E \minlength_{n}$ and $\Var \minlength_{n}$ and put for  simplicity
$$
\eta_{j}\,:=\,\sqrt{j^{2}+S_{j}^{2}}-j, \quad j\in\N.
$$
\begin{lemma}\label{lem:moments}
If $\E\xi=0$ and $\sigma^{2}=\E\xi^{2}<\infty$, then
$$
\E \minlength_{n} - n\ =\ \sum_{j=1}^{n}\frac{\E\eta_{j}}{j}\quad\text{and}\quad\Var \minlength_{n}\ =\ \sum_{j=1}^{n}\frac{\E \eta_{j}^{2}}{j}\,+\,O(1)\quad\text{as }n\to\infty.
$$
\end{lemma}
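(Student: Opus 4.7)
Starting from representation \eqref{eq:basic_representation2}, I would write
$\minlength_n - n \,\od\, \sum_{j=1}^n \sum_{i=1}^{K_{n,j}} \eta_{i,j}$,
where $\eta_{i,j} := \sqrt{j^{2} + S_{i,j}^{2}} - j$ are, for each $j$, i.i.d.\ copies of $\eta_j$, mutually independent across $i$ and $j$ and jointly independent of the cycle-count vector $(K_{n,1},\ldots,K_{n,n})$. Set $\mathcal F_{n} := \sigma(K_{n,1},\ldots,K_{n,n})$. Since $\E K_{n,j} = 1/j$ for $j \le n$ in a uniform random permutation, conditioning on $\mathcal F_n$ immediately yields
\begin{equation*}
\E[\minlength_n - n] \,=\, \sum_{j=1}^n \E[K_{n,j}]\,\E\eta_j \,=\, \sum_{j=1}^n \frac{\E\eta_j}{j},
\end{equation*}
which is the first identity.

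For the variance I would apply the law of total variance. By conditional independence, $\Var(\minlength_n \mid \mathcal F_n) = \sum_j K_{n,j}\Var(\eta_j)$, so $\E[\Var(\minlength_n \mid \mathcal F_n)] = \sum_j \Var(\eta_j)/j$. Since $\Var(\eta_j) + (\E\eta_j)^{2} = \E\eta_j^{2}$, the lemma reduces to showing
\begin{equation*}
\Var(\E[\minlength_n \mid \mathcal F_n]) \,=\, \sum_{j,k=1}^n \E\eta_j\,\E\eta_k\,\Cov(K_{n,j},K_{n,k}) \,=\, \sum_{j=1}^n \frac{(\E\eta_j)^{2}}{j} \,+\, O(1).
\end{equation*}

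The key inputs for this are the classical factorial-moment identities for cycle counts in $\mathfrak S_n$: $\E K_{n,j}(K_{n,j}-1) = 1/j^{2}$ for $2j \le n$ and $\E K_{n,j}K_{n,k} = 1/(jk)$ for distinct $j,k$ with $j+k \le n$, together with the trivial observation that two disjoint cycles of total length exceeding $n$ cannot coexist. These yield $\Var(K_{n,j}) = 1/j - (1/j^{2})\ind_{\{2j>n\}}$ and, for $j \ne k$, $\Cov(K_{n,j},K_{n,k}) = -(1/(jk))\,\ind_{\{j+k>n\}}$. Combined with the uniform bound $0 \le \E\eta_j \le \sigma^{2}/2$ from \eqref{eq:first_moment_estimate1}, the diagonal contribution becomes $\sum_j (\E\eta_j)^{2}/j - \sum_{j > n/2}(\E\eta_j)^{2}/j^{2} = \sum_j (\E\eta_j)^{2}/j + O(1/n)$, while the off-diagonal contribution is bounded in absolute value by $(\sigma^{2}/2)^{2}\sum_{j \ne k,\ j, k \le n,\ j+k > n} 1/(jk)$.

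The main (though not especially deep) obstacle is then the harmonic-sum estimate $\sum_{j \ne k,\ j, k \le n,\ j+k > n} 1/(jk) = O(1)$. I would verify it by splitting at $j = n/2$: for $j \le n/2$ the inner sum $\sum_{k = n-j+1}^n 1/k$ is at most $j/(n-j+1) = O(j/n)$, giving total contribution $O(1)$ after summing $(1/j)\cdot O(j/n)$ over $j \le n/2$; for $j > n/2$ the inner sum equals $H_n - H_{n-j}$ (the harmonic-number difference), and the substitution $i = n-j < n/2$ combined with the Stirling-type bound $\sum_{i=1}^{n/2} \log(n/i) = O(n)$ controls the total by $(2/n)\cdot O(n) = O(1)$. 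Assembling everything then yields $\Var\minlength_n = \sum_j \E\eta_j^{2}/j + O(1)$, as claimed.
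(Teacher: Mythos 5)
Your proposal is correct and follows essentially the same route as the paper: both condition on the cycle counts, use the identities $\E K_{n,j}=1/j$, $\E K_{n,j}(K_{n,j}-1)=\ind_{\{2j\le n\}}/j^{2}$, $\E K_{n,j}K_{n,k}=\ind_{\{j+k\le n\}}/(jk)$, and reduce the variance claim to bounding $\sum_{j+k>n,\,j,k\le n}1/(jk)$. The only (harmless) difference is that you establish this last sum is $O(1)$ by an elementary split at $n/2$, whereas the paper identifies its exact limit $-\int_{0}^{1}x^{-1}\log(1-x)\,{\rm d}x$ via a cited Riemann-sum computation.
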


\begin{proof}
It will be used that for any integers $1\le j \neq k \le n$,
\begin{equation} \label{eq:K_{n}j_moments}
\E K_{n,j}\ =\ \frac1j, \quad \E (K_{n,j}(K_{n,j}-1))\ =\ \frac{\1_{\{2j \le n\}}}{j^{2}}\quad
\text{and}\quad\E (K_{n,j}K_{n,k})\ =\ \frac{\1_{\{j+k \le n\}}}{jk};
\end{equation}

see \cite[Lemma~1.1]{Arratia+Barbour+Tavare:2003}. Thus, $K_{n,j}$ and $K_{n,k}$ are uncorrelated whenever $j+k \le n$.

\vspace{.1cm}
The formula for $\E \minlength_{n}$ follows immediately from representation~\eqref{eq:basic_representation2} and~\eqref{eq:K_{n}j_moments}. For the variance of $\minlength_{n}$, we obtain with the help of the formula for the variance of a random sum of i.i.d. random variables:
\begin{align*}
\Var&\Bigg(\sum_{i=1}^{K_{n,j}}\Big(\sqrt{j^{2}+S^{2}_{i,j}}-j\Big) \Bigg)\ =\ \E K_{n,j}\Var\eta_{j}+(\E\eta_{j})^{2} \Var K_{n,j}\\
&=\ \frac{\E \eta_{j}^{2}}{j}-\frac{(\E\eta_{j})^{2}}{j}+(\E\eta_{j})^{2}\left(\frac{j-\1_{\{2j>n\}}}{j^{2}}\right)\ =\ \frac{\E \eta_{j}^{2}}{j}-\frac{(\E\eta_{j})^{2} \1_{\{2j> n\}}}{j^{2}}.
\end{align*}
Similarly, by conditioning on $K_{n,j}$ and $K_{n,k}$ and setting $\gamma_{jk}^{(n)}:=\Cov( K_{n,j}, K_{n,k})$,
$$
\Cov\Bigg(\sum_{i=1}^{K_{n,j}}\Big(\sqrt{j^{2}+S^{2}_{i,j}}-j\Big),\sum_{i=1}^{K_{n,k}}\Big(\sqrt{k^{2}+S^{2}_{i,k}}-k\Big)\hspace{-3pt}\Bigg)\ =\ \gamma_{jk}^{(n)}\,\E \eta_{j}\,\E\eta_{k}.
$$
Combining these formulas with~\eqref{eq:basic_representation2} and \eqref{eq:K_{n}j_moments}, we obtain
\begin{align*}
\Var(\minlength_{n})\ &=\ \sum_{j=1}^{n} \Var \bigg( \sum_{i=1}^{K_{n,j}}\Big(\sqrt{j^{2}+S^{2}_{i,j}}-j\Big) \bigg) + \sum_{1\le j\neq k\le n}\gamma_{jk}^{(n)}\,\E \eta_{j}\,\E\eta_{k}\notag\\
&=\ \sum_{j=1}^{n}\frac{\E \eta_{j}^{2}}{j} -  \sum_{\substack{1\le j\le n: \\2j >n}} \frac{(\E \eta_{j})^{2}}{j^{2}} - \sum_{\substack{1 \le j\neq k \le n:\\ j+k >n}} \frac{\E\eta_{j}\,\E\eta_{k}}{jk}\\
&=\ \sum_{j=1}^{n}\frac{\E \eta_{j}^{2}}{j} - \sum_{\substack{1 \le j, k \le n:\\ j+k >n}} \frac{\E \eta_{j}\,\E \eta_{k}}{jk}.
\end{align*}
The last term on the right-hand side can be estimated by using inequality \eqref{eq:first_moment_estimate1}, viz.
\begin{align*}
0\ &\le\ \sum_{\substack{1 \le j, k \le n:\\j+k >n}}\frac{\E \eta_{j}\,\E \eta_{k}}{jk}\ \le\   \frac{\sigma^{4}}{4}\sum_{\substack{1\le j, k \le n:\\ j+k >n}}\frac{1}{jk}\ =\ \frac{\sigma^{4}}{4}\sum_{j=1}^{n}\frac{1}{j}\sum_{k=n-j+1}^{n}\frac{1}{k}\\
&=\ \frac{\sigma^{4}}{4}\sum_{j=1}^{n-1}\frac{1}{j}\sum_{k=n-j+1}^{n}\frac{1}{k}\ +\ O\left(\frac{\log n}{n}\right)\\
&\le\ \frac{\sigma^{4}}{4}\sum_{j=1}^{n-1}\frac{1}{j}\sum_{k=n-j+1}^{n}\int_{k-1}^{k}\frac{{\rm d}x}{x}\ +\ O\left(\frac{\log n}{n}\right)\\
&=\ \frac{\sigma^{4}}{4}\sum_{j=1}^{n-1}\frac{\log n-\log (n-j)}{j}\ +\ O\left(\frac{\log n}{n}\right)\\
&=\ -\frac{\sigma^{4}}{4n}\sum_{j=1}^{n-1}\left(\frac{j}{n}\right)^{-1}\log\left(1-\frac{j}{n}\right)\ +\ O\left(\frac{\log n}{n}\right)\\
&=\ -\frac{\sigma^{4}}{4}\int_{0}^{1} x^{-1}\log (1-x){\rm d}x\ +\ O\left(\frac{\log n}{n}\right),
\end{align*}
as $n \to \infty$, where the last passage follows from formula (21) in \cite{IksMarMoe:2009} with $b=k=1$. This proves the formula for $\Var \minlength_{n}$ because the last integral is finite.
\end{proof}

We are ready to prove our claims on the asymptotics of the moments of $\minlength_{n}$.

\begin{proof}[Proof of Theorem~\ref{thm:moments}]
By the equality in \eqref{eq:lem_ui_proof00}, the law of large numbers and the central limit theorem,
\begin{equation}\label{eq:lem_ui_proof1}
\eta_{n}\ \todistr\ \frac{1}{2}\,\mathcal{N}^{2}(0,\sigma^{2}).
\end{equation}
Under the assumptions $\E \xi^{2}<\infty$ and $\E\xi=0$, the family $(S_{n}^{2}/n)_{n\in\N}$ is uniformly integrable \cite[Thm.~1.6.3]{Gut:2009} whence, by the inequality in \eqref{eq:lem_ui_proof00}, the same holds for $(\eta_{n})_{n\in\N}$. In conjunction with \eqref{eq:lem_ui_proof1}, this yields
\begin{equation}\label{eq:lem_ui_proof0}
\lim_{n\to\infty}\E \eta_{n}\ =\ \frac{1}{2} \E \mathcal{N}^{2}(0,\sigma^{2})\ =\ \frac{\sigma^{2}}{2}.
\end{equation}
Combined with Lemma~\ref{lem:moments}, this gives the asympotics of $\E \minlength_{n} -n$ stated in~\eqref{eq:prop_first_moment}. 

For the proof of the remaining claims, let us introduce the function
\[
g(x)\,:=\,x^2/2 - \sqrt{1+x^2}+1, \quad x \ge 0,
\]
which satisfies $g(x) \simeq x^4/8$ as $x \to 0$, and is non-negative and strictly increasing for $x>0$ since $g'(x)>0$. We have
\begin{equation} \label{eq:E_eta}
\frac{\sigma^{2}}{2} -\,\E \eta_{j}\ =\ \E\left(\frac{S_{j}^{2}}{2j} - \Big( \sqrt{j^{2}+S_{j}^{2}}-j \Big) \right ) \ =\ j\E g\left(\frac{|S_{j}|}{j} \right ), 
\end{equation}
hence by Lemma~\ref{lem:moments}, 
$$
\E \minlength_{n} - n = \sum_{j=1}^{n} \frac{\E \eta_{j}}{j} = \sum_{j=1}^{n} \frac{\sigma^{2}}{2j}\,-\, \sum_{j=1}^{n} \E g\left(\frac{|S_{j}|}{j} \right ).
$$

To study convergence of the last sum, we employ  \cite[Thm.~1]{Pruss:1997} which reads as follows: there exist positive constants $C_1, C_2$ such that for every $x >0$, 
\begin{equation} \label{eq:Pruss}
C_1 x^{-2} \E (\xi^2 \1_{\{|\xi| \ge x\}})\ \le \ F(x):=\sum_{j=1}^\infty \P\{|S_j| \ge x j\} \ \le \ C_2 x^{-2} \E (\xi^2 \1_{\{|\xi| \ge x\}}).
\end{equation}
By Fubini's theorem, we have
\[
\sum_{j=1}^\infty \E g(|S_j|/j)\ =\ \int_0^\infty g'(x) F(x)\ {\rm d} x.
\]
Therefore, by \eqref{eq:Pruss} the right-hand side is finite if and only if the integrals
\begin{equation} \label{eq:Pruss_Tonelli}
\int_0^\infty \frac{g'(x)}{x^2} \E (\xi^2 \1_{\{|\xi| \ge x\}})\ {\rm d} x\ =\ \int_0^\infty t^2 \left( \int_0^t \frac{g'(x)}{x^2} {\rm d} x \right)\ \P\{ |\xi| \in {\rm d } t \}
\end{equation}
are finite. The function $g'(x) /x^2$ is integrable at $x=0$ since $g'(x) \simeq x^3/2$ as $x \to 0$. On the other hand, since $g'(x) = x - 1 + o(1)$ as $x \to \infty$, we have $\int_0^t ( g'(x)/x^2) {\rm d} x = \log t + O(1)$ as $t \to \infty$. Hence, by $\E \xi^2 < \infty$, the right-hand side of \eqref{eq:Pruss_Tonelli} is finite if and only if $\E\xi^{2}\log^{+} |\xi|<\infty$, which is thus equivalent to  the asymptotic relation $\E \minlength_{n} = n + \frac12 \sigma^2 \log n + O(1)$ as $n \to \infty$. 

\vspace{.1cm}
Let us now prove the claims about $\Var \minlength_{n}$. Note that, by \eqref{eq:E_eta}, 
\[
\E\eta_{j}^{2}\ =\ \E\left(\sqrt{j^{2}+S_{j}^{2}}-j\right)^{2}
\ =\ \E \left(S_{j}^{2}-2j\left(\sqrt{j^{2}+S_{j}^{2}}-j\right)\right)\ =\ 2j^2 \E g\left(\frac{|S_{j}|}{j} \right ).
\]
Thus, by Lemma~\ref{lem:moments},
\begin{equation} \label{eq:Var=}
\Var(\minlength_{n})\ =\ \sum_{j=1}^{n}\frac{\E\eta_{j}^{2}}{j}\,+\,O(1)\ =\
2 \sum_{j=1}^{n} j \E g\left(\frac{|S_{j}|}{j} \right ) + O(1) \quad\text{as }n\to\infty.
\end{equation}
In particular, by \eqref{eq:lem_ui_proof0} and \eqref{eq:E_eta}, this implies $\Var(\minlength_{n}) = o(n)$, that is, \eqref{eq:prop_variance_poly_asymp} holds for $p=2$.

From now on we assume $\E |\xi|^p< \infty$ for some $p \in (2,3]$. To prove the remaining claims we first need to estimate truncated moments of $S_{j}$. Let us show that  for every $\varepsilon >0$, 
\begin{equation} \label{eq:second_moment_truncated}
\E ( S_{j}^{2}\1_{\{|S_{j}|\ge\varepsilon j\}} )\ =\ o(j^{3-p}) \quad \text{as } j \to \infty,
\end{equation}
and 
\begin{equation} \label{eq:fourth_moment_truncated}
j^{-2} \E (S_{j}^{4}\1_{\{|S_{j}|< \varepsilon j\}})\ =\  
\begin{cases}
\hfill o(j^{3-p}), & p \in (2,3)\\
3 \sigma^4  +  \varepsilon c(j, \varepsilon), & p=3  \\
\end{cases}
\qquad \text{as } j \to \infty,
\end{equation}
where $\limsup_{j \to \infty} |c(j, \varepsilon)|$ is uniformly bounded in $\varepsilon >0$.

To this end, we employ \cite[Claim~22 in Chap.~V,~$\S$5]{Petrov:1975}: for every $p \in (2,3)$, $j \in \N$, and $ x \in \R$,
\begin{equation} \label{eq:Petrovs_bound}
\left|\P\left\{\frac{S_{j}}{\sigma \sqrt{j}}< x\right\}-\P\left\{\mathcal{N}<x\right\}\right|\ \le\ \frac{\psi(\sqrt{j} (1+|x|))}{j^{p/2-1} (1+|x|)^p}, 
\end{equation}
where $\psi$ is a bounded decreasing function on $[1, \infty)$ such that $\psi(x) \to 0 $ as $x \to \infty$. This inequality is also valid for $p=3$ with a constant function $\psi$, see~\cite[Thm.~14 in Chap.~V]{Petrov:1975}.

Notice that for any non-negative random variable $X$ such that $\E X^2<\infty$ and any $a >0$, 
\[
\E ( X^2 \1_{\{ X \ge a \}} ) \ =\ -\int_{[a,\infty)} x^2\  {\rm d} \P\{ X \ge x \}\ =\ a^2 \P\{X \ge a \} + 2 \int_a^\infty x \P\{X \ge x \}\  {\rm d} x. 
\]
Using this formula twice with $X=\frac{|S_{j}|}{\sigma \sqrt{j}}$ and $X= |\mathcal{N}|$, we obtain from \eqref{eq:Petrovs_bound}
\begin{multline}
\bigg | \E  \bigg ( \frac{S_j^2}{\sigma^2 j} \1_{\{ |S_j| \ge \sigma a \sqrt j\}} \bigg)  - \E \big( \mathcal{N}^2  \1_{\{ |\mathcal{N}| \ge  a \}} \big) \bigg|\\
\le \ \frac{2a^2 \psi(\sqrt{j} )}{j^{p/2 -1} a^p}  + 2 \int_a^\infty   \frac{2x \psi(\sqrt{j} )}{j^{p/2 -1} x^p} {\rm d} x\ 
 =\ \frac{2pa^{2-p} \psi(\sqrt{j})}{(p-2) j^{p/2-1}}.\label{eq:petrovs_another_bound}
\end{multline}

This implies \eqref{eq:second_moment_truncated} for $p \in (2,3)$ upon taking $a = \varepsilon \sqrt j /\sigma$ and applying \eqref{eq:normal_truncated_bound} with $\gamma=1$ and $\sigma_j$ replaced by $\sigma$. However, for $p=3$, this gives only $\E ( S_{j}^{2}\1_{\{|S_{j}|\ge\varepsilon j\}} ) =  O(1) $. In order to replace $O(1)$ by $o(1)$, pick $R>\varepsilon$ and write
\[
\E ( S_{j}^{2}\1_{\{|S_{j}|\ge\varepsilon j\}}) \ \le \ R^2 j^2 \P\{|S_{j}|\ge\varepsilon j\} + \E ( S_{j}^{2}\1_{\{|S_{j}|\ge R j\}}).
\]
The first summand is $o(1)$ by a standard estimate of the rate of convergence in the law of large numbers (see~\cite[Theorem 28 in Chap.~IX]{Petrov:1975}). The second summand is bounded by $6 \sigma \psi(1) R^{-1}$  in view of \eqref{eq:petrovs_another_bound} and \eqref{eq:normal_truncated_bound} applied with $a = R \sqrt j/\sigma$. This implies  \eqref{eq:second_moment_truncated} for $p=3$ since $R$ can be arbitrarily large.

Similarly, from \eqref{eq:fourth_moment} and \eqref{eq:Petrovs_bound} it follows that
\[
\bigg | \E  \bigg ( \frac{S_j^4}{\sigma^4 j^2} \1_{\{ |S_j| < \sigma a \sqrt j\}} \bigg)  - \E \big( \mathcal{N}^4  \1_{\{ |\mathcal{N}| <  a \}} \big) \bigg|\ \le\ \frac{2(8-p) a^{4-p} \psi(\sqrt{j})}{(4-p) j^{p/2-1}}.
\]
This implies \eqref{eq:fourth_moment_truncated} for $p \in (2,3]$ upon taking $a = \varepsilon \sqrt j /\sigma$, where for $p=3$, we put
\[
c(j, \varepsilon)\ :=\ \varepsilon^{-1} \bigg [ \E  \bigg ( \frac{S_j^4}{j^2} \1_{\{ |S_j| < \varepsilon j\}} \bigg)  - 3 \sigma^4 \bigg],
\]
which satisfies $|c(j, \varepsilon)|\le  10\sigma^3 \psi(1) + \sigma^4 \varepsilon^{-1} \E \big( \mathcal{N}^4 \1_{\{ |\mathcal{N}| \ge  \varepsilon \sqrt j /\sigma\}} \big)$, hence $\limsup_{j \to \infty} |c(j, \varepsilon)|$ is uniformly bounded in $\varepsilon >0$, as required.

\vspace{.1cm}
We can now prove an upper bound for $\Var(\minlength_{n})$. Using that $g(x) \le \min(x^2/2, x^4/8)$, we have, for every $\varepsilon >0$, 
$$
j\E g\left(\frac{|S_{j}|}{j} \right )  \ \le\ \E \left(\frac{S_{j}^{4}}{8 j^3}\1_{\{|S_{j}|<\varepsilon j\}}\hspace{-2pt}\right)\,+\,\E\left(\frac{S_{j}^{2}}{2j}\1_{\{|S_{j}| \ge\varepsilon j\}}\hspace{-2pt}\right).
$$
Thus, \eqref{eq:second_moment_truncated} and \eqref{eq:fourth_moment_truncated} yield
\[
j\E g\left(\frac{|S_{j}|}{j} \right ) \ \leq \
\begin{cases}
o(j^{2-p}), & p \in (2,3)\\
\frac{1}{8j} \big (3\sigma^4   +   \varepsilon c(j, \varepsilon)+ o(1) \big), & p=3  \\
\end{cases}
\qquad \text{as } j \to \infty.
\]
Upon summation over $j$ and using \eqref{eq:Var=} we derive $\Var(\minlength_{n}) = o(n^{3-p})$ for $p \in (2,3)$, thereby completing the proof of \eqref{eq:prop_variance_poly_asymp}. Similarly, for $p=3$, summing over $j$, dividing by $\log n$, sending $n\to\infty$ and then $\varepsilon \to 0$, we obtain $\limsup_{n\to\infty}\frac{\Var(\minlength_{n})}{\log n} \leq \frac{3\sigma^4}{4}$. 

To establish the matching lower bound for $p=3$ we argue as follows. Since $g(x)\simeq x^4/8$, as $x\to 0$, for every fixed $\varepsilon_1>0$ we can find $\delta_1=\delta_1(\varepsilon_1)>0$ such that
$$
1-\varepsilon_1\leq \frac{8g(x)}{x^4} \quad \text{for }0<x\leq \delta_1.
$$
Therefore,
\begin{align*}
\liminf_{j\to\infty}\E\left(j^2 g\left(\frac{|S_j|}{j}\right)\right)\ &\ge\ \liminf_{j\to\infty}\E\left(j^2 g\left(\frac{|S_j|}{j}\right)\ind_{\{|S_j|\leq \delta_1 j\}}\right)\\
&\ge\ (1-\varepsilon_1)\liminf_{j\to\infty}\E\left(\frac{S_j^4}{8j^2}\ind_{\{|S_j|\leq \delta_1 j\}}\right).
\end{align*}
By \eqref{eq:fourth_moment_truncated} and upon sending $\varepsilon_1$ to zero, this yields
$$
\liminf_{j\to\infty}\E\left(j^2 g\left(\frac{|S_j|}{j}\right)\right)\ \ge\ \frac{3\sigma^4}{8}.
$$
Thus arriving at $\liminf_{n\to\infty}\frac{\Var(\minlength_{n})}{\log n} \geq \frac{3}{4}\sigma^4$, we have completed the proof of \eqref{eq:prop_variance_log_asymp}.

It remains to prove the lower bound \eqref{eq:prop_variance_large} for $\Var \minlength_{n}$. We note that for any $j \in \N$ (see \cite[Lemma~2.1]{Cai+Zhu:2006} or \cite[p.~290]{Erdos:1949}),
\[
\P\{|S_j| \ge j\} \ge  j \rho_j  \P\{|\xi| \ge 2 j\},
\]
where $\rho_j:= \P\{|S_{j-1}| < j\}  - j \P\{|\xi| \ge 2j\}$ and $S_0:=0$. As $g$ is an increasing function, this yields
\[
\E g(|S_j|/j ) \ \ge \ g(1) \P \{|S_j| \ge j\} \ \ge \ g(1) j \rho_j   \P\{|\xi| \ge 2 j\}.
\]
Hence, we obtain by \eqref{eq:Var=}
\[
\Var(\minlength_{n}) = 2 \sum_{j=1}^{n} j \E g(|S_j|/j ) + O(1) \ge 2g(1) \sum_{j=1}^{n}  j^2 \rho_j \P \{|\xi| \ge 2j\} + O(1)
\] 
as $n \to \infty$. This gives the desired bound $\Var(\minlength_{n}) \ge 0.02 n^3  \P \{|\xi| \ge 2n\} + O(1)$ since $2g(1)/3>0.02$ and $\lim_{j \to \infty} p_j = 1$. The last statement holds by the assumptions $\E \xi^2<\infty$ and $\E \xi=0$. This finishes the proof of Theorem \ref{thm:moments}.
\end{proof}

\subsection{Proof of Theorem \ref{thm:infinite_variance}}

We start by showing that the random series representing the limit random variable in Theorem \ref{thm:infinite_variance} is finite a.s.

\begin{proposition}\label{prop:limit_is_finite}
Let $(Z_{1},Z_{2},\ldots)$ be a random sequence having a Poisson--Dirichlet distribution with parameter $\theta=1$. Further, let $\mathcal{S}_{\alpha}^{(k)}=(\mathcal{S}_{\alpha}^{(k)}(t))_{t\in[0,1]}$, $k=1,2,\ldots,$ be independent copies of the strictly stable process $\mathcal{S}_{\alpha}$ with index $\alpha\in(1,2)$. Then,
$$
\sum_{k=1}^{\infty}\frac{(\mathcal{S}_{\alpha}^{(k)}(Z_{k}))^{2}}{Z_{k}}<\infty\quad\text{a.s.}
$$
\end{proposition}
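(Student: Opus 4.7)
The plan is to bound the $s$-th moment of the series for a suitably chosen $s \in (0,1)$, which forces a.s.\ finiteness. The whole approach exploits the self-similarity of the stable process together with a moment computation for the Poisson--Dirichlet weights via the GEM stick-breaking representation.

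First, the reduction. By the self-similarity $\mathcal{S}_{\alpha}(t) \od t^{1/\alpha}\mathcal{S}_{\alpha}(1)$ and the independence of $(Z_{k})$ from $(\mathcal{S}_{\alpha}^{(k)})$, conditioning on $(Z_k)$ gives
$$
\E\!\left[\frac{(\mathcal{S}_{\alpha}^{(k)}(Z_k))^{2s}}{Z_k^{s}}\,\Big|\,(Z_j)_{j\in\N}\right]
\;=\;Z_k^{\,s(2/\alpha-1)}\cdot\E|\mathcal{S}_{\alpha}(1)|^{2s}.
$$
Set $\beta := 2/\alpha - 1$; since $\alpha\in(1,2)$, we have $\beta\in(0,1)$.

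Second, the moment inequality. Pick any $s\in(0,\alpha/2)$. The tails of $\mathcal{S}_{\alpha}(1)$ are regularly varying with index $-\alpha$, so $\E|\mathcal{S}_{\alpha}(1)|^{2s}<\infty$. Using the elementary subadditivity $\bigl(\sum a_k\bigr)^{s}\le\sum a_k^{s}$ for nonnegative $a_k$ and $s\in(0,1]$, together with Tonelli and the conditional computation above,
$$
\E\!\left[\left(\sum_{k=1}^{\infty}\frac{(\mathcal{S}_{\alpha}^{(k)}(Z_k))^{2}}{Z_k}\right)^{\!s}\right]
\;\le\;\E|\mathcal{S}_{\alpha}(1)|^{2s}\cdot\E\sum_{k=1}^{\infty}Z_k^{\,s\beta}.
$$

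Third, the Poisson--Dirichlet moment. Let $(W_k)_{k\in\N}$ denote the size-biased permutation of $(Z_k)_{k\in\N}$; since PD$(1)$ arises from GEM$(1)$, we have $W_k = B_k\prod_{j<k}(1-B_j)$ with $B_j$ i.i.d.\ Uniform$[0,1]$. For any $r>0$, independence and $\E B^{r} = \E(1-B)^{r} = (r+1)^{-1}$ yield $\E W_k^{r} = (r+1)^{-k}$, hence
$$
\E\sum_{k=1}^{\infty}Z_k^{r}\;=\;\E\sum_{k=1}^{\infty}W_k^{r}\;=\;\sum_{k=1}^{\infty}(r+1)^{-k}\;=\;\frac{1}{r}\;<\;\infty.
$$
Applying this with $r=s\beta>0$ bounds the right-hand side of the previous display by $s^{-1}\beta^{-1}\E|\mathcal{S}_{\alpha}(1)|^{2s}$, which is finite. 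A random variable with a finite positive moment is a.s.\ finite, which proves the proposition.

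There is no real obstacle: the only point requiring care is the simultaneous choice of the exponent $s$, which must satisfy $s<\alpha/2$ (for $\E|\mathcal{S}_{\alpha}(1)|^{2s}<\infty$) while $s\beta$ remains strictly positive (for $\E\sum_k Z_k^{s\beta}<\infty$); both requirements are compatible for every $\alpha\in(1,2)$.
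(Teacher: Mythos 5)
Your argument is correct and follows essentially the same route as the paper: subadditivity of $x\mapsto x^{s}$ for $s\in(0,\alpha/2)$ plus self-similarity reduce the problem to $\E|\mathcal{S}_{\alpha}(1)|^{2s}\cdot\E\sum_{k\ge 1}Z_{k}^{s(2/\alpha-1)}<\infty$. The only (harmless) difference is how the Poisson--Dirichlet sum is evaluated: you derive $\E\sum_{k}Z_{k}^{r}=1/r$ directly from the GEM$(1)$ stick-breaking representation of the size-biased permutation, while the paper cites formula (2.1) of Arratia--Barbour--Tavar\'e, which gives the same value $\int_{0}^{1}x^{r-1}\,{\rm d}x$.
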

\begin{proof}
Fix an arbitrary $\delta\in (0,\alpha/2)$. Using that $x\mapsto x^{\delta}$ is subadditive and then the self-similarity of the process $\mathcal{S}_{\alpha}$, we obtain
\[
\E\left(\sum_{k=1}^{\infty}\frac{(\mathcal{S}_{\alpha}^{(k)}(Z_{k}))^{2}}{Z_{k}}\right)^{\delta}\ \le \ \sum_{k=1}^{\infty}\E\left(\frac{(\mathcal{S}_{\alpha}^{(k)}(Z_{k}))^{2}}{Z_{k}}\right)^{\delta} \ = \ \E|\mathcal{S}_{\alpha}(1)|^{2\delta}\sum_{k=1}^{\infty}\E Z_{k}^{(2/\alpha-1)\delta}.
\]
Since $\E |\mathcal{S}_{\alpha}(1)|^{2\delta}<\infty$ (as $2\delta<\alpha$), it remains to check that $\sum_{k=1}^{\infty}\E Z_{k}^{(2/\alpha-1)\delta}$ is finite. To this end, formula (2.1) from \cite{Arratia+Barbour+Tavare:2006} with $\phi(x)=x^{(2/\alpha-1)\delta}$ can be used to see that
$$
\sum_{k=1}^{\infty}\E Z_{k}^{(2/\alpha-1)\delta}\ =\ \int_{0}^{1} \frac{\phi(x)}{x}{\rm d}x\ <\ \infty.
$$
This completes the proof and we note that the same argument applies to any strictly stable process of index $\alpha\in (0,2)$.
\end{proof}

Before passing to the proof of Theorem \ref{thm:infinite_variance}, we give an auxiliary lemma.

\begin{lemma}\label{lem:lipschitz}
If $\E |\xi|<\infty$, then
$$
\E \left|\left(\sqrt{i^{2}+S_i^{2}}-i\right)-\left(\sqrt{j^{2}+S_{j}^{2}}-j\right)\right|\ \le\ (2+\E|\xi|)\,|i-j|
$$
holds for any $i,j\in\N$.
\end{lemma}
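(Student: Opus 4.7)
The plan is to interpret $\sqrt{k^{2}+S_k^{2}}$ geometrically as the Euclidean norm of the vector $(k,S_k)\in\R^{2}$, and then apply the triangle inequality in the plane followed by subadditivity of the norm. Without loss of generality assume $i\le j$. By the triangle inequality applied to the three points $(0,0)$, $(i,S_i)$, $(j,S_j)$,
$$
\bigl|\sqrt{j^{2}+S_j^{2}}-\sqrt{i^{2}+S_i^{2}}\bigr|\ \le\ \sqrt{(j-i)^{2}+(S_j-S_i)^{2}}\ \le\ |j-i|+|S_j-S_i|,
$$
where the second inequality uses $\sqrt{a^{2}+b^{2}}\le|a|+|b|$.

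From this I would deduce, by the triangle inequality for absolute values,
$$
\Bigl|\bigl(\sqrt{i^{2}+S_i^{2}}-i\bigr)-\bigl(\sqrt{j^{2}+S_j^{2}}-j\bigr)\Bigr|\ \le\ \bigl|\sqrt{j^{2}+S_j^{2}}-\sqrt{i^{2}+S_i^{2}}\bigr|+|j-i|\ \le\ 2|i-j|+|S_j-S_i|.
$$

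Finally I would take expectations and use that $S_j-S_i$ is a sum of $|j-i|$ i.i.d.\ copies of $\xi$, so that $\E|S_j-S_i|\le|j-i|\,\E|\xi|$ by the triangle inequality in $L^{1}$. Combining gives exactly $(2+\E|\xi|)|i-j|$. I do not expect any real obstacle here; the only thing to be slightly careful about is handling the case $i>j$ (which is symmetric) and noting that the bound is trivially $0$ when $i=j$. This is a short, self-contained Lipschitz-type estimate that will be used as an input to the approximation arguments in the proof of Theorem~\ref{thm:infinite_variance}.
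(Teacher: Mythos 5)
Your proof is correct and yields exactly the paper's constant $(2+\E|\xi|)|i-j|$; the only difference lies in how the pointwise Lipschitz bound on $k\mapsto\sqrt{k^{2}+S_k^{2}}$ is obtained. You view $\sqrt{k^{2}+S_k^{2}}$ as the Euclidean norm of $(k,S_k)$ and invoke the reverse triangle inequality together with $\sqrt{a^{2}+b^{2}}\le|a|+|b|$, giving the pointwise estimate $\bigl|\sqrt{j^{2}+S_j^{2}}-\sqrt{i^{2}+S_i^{2}}\bigr|\le|j-i|+|S_j-S_i|$; the paper instead multiplies by the conjugate, splits the numerator $|i^{2}-j^{2}|+|S_i^{2}-S_j^{2}|$, and bounds the two denominators separately, arriving at the (marginally sharper, but equivalent after $\bigl||S_i|-|S_j|\bigr|\le|S_i-S_j|$) bound $|i-j|+\bigl||S_i|-|S_j|\bigr|$. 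From that point the two arguments coincide: add the extra $|i-j|$ coming from the terms $-i$ and $-j$, take expectations, and use that $S_j-S_i$ is a sum of $|i-j|$ i.i.d.\ copies of $\xi$, so $\E|S_j-S_i|\le|i-j|\,\E|\xi|$. Your geometric route is arguably cleaner and makes the $1$-Lipschitz character of the norm explicit, while the paper's algebraic computation is self-contained at the level of elementary identities; neither buys anything quantitatively over the other, and your handling of $i=j$ and of the symmetric case $i>j$ is fine.
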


\begin{proof}
It is obviously enough to show that
$$
\E \left|\sqrt{i^{2}+S_i^{2}}-\sqrt{j^{2}+S_{j}^{2}}\right|\ \le\ (1+\E|\xi|)|i-j|,
$$
which follows from
\begin{align*}
\E&\left|\sqrt{i^{2}+S_i^{2}}-\sqrt{j^{2}+S_{j}^{2}}\right|\ =\ \E \left|\frac{i^{2}+S_i^{2}-j^{2}-S_{j}^{2}}{\sqrt{i^{2}+S_i^{2}}+\sqrt{j^{2}+S_{j}^{2}}}\right|\\
&\le\ \E \frac{|i^{2}-j^{2}|}{\sqrt{i^{2}+S_i^{2}}+\sqrt{j^{2}+S_{j}^{2}}}\,+\,
\E \frac{|S_i^{2}-S_{j}^{2}|}{\sqrt{i^{2}+S_i^{2}}+\sqrt{j^{2}+S_{j}^{2}}}\\
&\le\ \frac{|i^{2}-j^{2}|}{\sqrt{i^{2}}+\sqrt{j^{2}}}\,+\,
\E\frac{|S_i^{2}-S_{j}^{2}|}{\sqrt{S_i^{2}}+\sqrt{S_{j}^{2}}}\\
&=\ |i-j|+\E ||S_i|-|S_{j}||\ \le\ |i-j|+\E |S_i-S_{j}|\\
&=\ |i-j|+\E |S_{i-j}|\ \le\ (1+\E|\xi|)|i-j|
\end{align*}
for any $i,j\in\N$.
\end{proof}

\begin{proof}[Proof of Theorem \ref{thm:infinite_variance}]
Fixing a coupling between $(Z_{n,k})$ and $(Z_{k})$ such that \eqref{eq:L_{1}_conv_{j}oint_large_cycles} holds, we show first that
\begin{equation}
\label{eq:infinite_mean_approximation1}
\frac{1}{b_{n}}\Bigg|\sum_{k=1}^{n}\left(\sqrt{Z_{n,k}^{2}+S^{2}_{k,Z_{n,k}}}-Z_{n,k}\right)\ - \ \sum_{k=1}^{n}\left( \sqrt{\lfloor nZ_{k}\rfloor^{2}+S^{2}_{k,\lfloor nZ_{k}\rfloor}}-\lfloor nZ_{k}\rfloor\right)\Bigg|\ \toprobab\ 0.
\end{equation}
Recall that $b_{n}=a_{n}^{2}/n$ with $a_{n}$ given by \eqref{eq:clt_for_rw}. By Lemma \ref{lem:lipschitz} and use of the triangle inequality, the expectation of the difference in \eqref{eq:infinite_mean_approximation1} is bounded by a constant times
$$
\frac{n}{a_{n}^{2}}\sum_{k=1}^{n}\E |Z_{n,k}-\lfloor nZ_{k}\rfloor |\ \le\ \frac{n^{2}}{a_{n}^{2}}\sum_{k=1}^{\infty}\E \left|\frac{Z_{n,k}}{n}-Z_{k} \right|\,+\,\frac{n}{a_{n}^{2}}\,\E \sum_{k=1}^{\infty}\{nZ_{k}\},
$$
where $\{x\}$ denotes the fractional part of $x\in\R$. Then use \eqref{eq:L_{1}_conv_{j}oint_large_cycles} to assess that the first summand on the right is of the same order as $a_{n}^{-2}n\log n/4$ and thus tending to zero as $n\to\infty$ because $(a_{n}^{2})_{n\in\N}$ is regularly varying with index $2/\alpha>1$. The second summand tends to zero by the same reasoning and the fact that
\begin{equation*} 
\E \sum_{k=1}^{\infty}\{nZ_{k}\}\ \simeq\ \frac{1}{2}\log n\quad\text{as }n\to\infty,
\end{equation*}
which can be justifed as follows: by formula (2.1) in \cite{Arratia+Barbour+Tavare:2006} with $\phi(x)=\{nx\}$, we have
\begin{gather*}
\E \sum_{k=1}^{\infty}\{nZ_{k}\}\ =\ \int_{0}^{1} \frac{\{nx\}{\rm d}x}{x}\ =\ \int_{0}^{n} \frac{\{y\}{\rm d}y}{y}\ =\ \sum_{k=0}^{n-1}\int_{k}^{k+1}\frac{y-k}{y}{\rm d}y\\
=\ 1+\sum_{k=1}^{n-1}\left(1-k\log \left(\frac{k+1}{k}\right)\right)\ \simeq\ 1+\frac{1}{2}\sum_{k=1}^{n-1}\frac{1}{k}\ \simeq\ \frac{1}{2}\log n
\end{gather*}
as $n\to\infty$. Now \eqref{eq:infinite_mean_approximation1} follows by these estimates and Markov's inequality.

\vspace{.1cm}
In view of \eqref{eq:infinite_mean_approximation1} and representation \eqref{eq:basic_representation1}, it remains to prove
\begin{equation}
\frac{\sum_{k=1}^{n}\left(\sqrt{\lfloor nZ_{k}\rfloor^{2}+S^{2}_{k,\lfloor nZ_{k}\rfloor}}-\lfloor nZ_{k}\rfloor\right)}{b_{n}}\ \todistr\ \sum_{k=1}^{\infty}\frac{(\mathcal{S}_{\alpha}^{(k)}(Z_{k}))^{2}}{2Z_{k}}.
\end{equation}
To this end, we use Theorem 3.2 in \cite{Billingsley:1999} for which the following two assertions must be verified: First, for any fixed $m\in\N$,
\begin{equation}\label{eq:infinite_variance_bill1}
\frac{\sum_{k=1}^{m}\left(\sqrt{\lfloor nZ_{k}\rfloor^{2}+S^{2}_{k,\lfloor nZ_{k}\rfloor}}-\lfloor nZ_{k}\rfloor\right)}{b_{n}}\ \todistr\ \sum_{k=1}^{m}\frac{(\mathcal{S}_{\alpha}^{(k)}(Z_{k}))^{2}}{2Z_{k}},
\end{equation}
and, second, for any $\varepsilon>0$,
\begin{equation}\label{eq:infinite_variance_bill2}
\lim_{m\to\infty}\limsup_{n\to\infty}\,\P\left\{\left|\sum_{k=m}^{n}\left(\sqrt{\lfloor nZ_{k}\rfloor^{2}+S^{2}_{k,\lfloor nZ_{k}\rfloor}}-\lfloor nZ_{k}\rfloor\right)\right|>\varepsilon b_{n}\right\}\ =\ 0.
\end{equation}

Note that \eqref{eq:infinite_variance_bill1} is obviously equivalent to
$$
\sum_{k=1}^{m}\frac{S^{2}_{k,\lfloor nZ_{k}\rfloor}}{a_{n}^{2}\left(\sqrt{\lfloor nZ_{k}\rfloor^{2}+S^{2}_{k,\lfloor nZ_{k}\rfloor}}+\lfloor nZ_{k}\rfloor\right)/n}\ \todistr\ \sum_{k=1}^{m}\frac{(\mathcal{S}_{\alpha}^{(k)}(Z_{k}))^{2}}{2Z_{k}}.
$$
But this is true by the continuous mapping theorem and the joint convergence
\[
\left(\frac{S^{2}_{k,\lfloor nZ_{k}\rfloor}}{a_{n}^{2}},\frac{\sqrt{\lfloor nZ_{k}\rfloor^{2}+S^{2}_{k,\lfloor nZ_{k}\rfloor}}+\lfloor nZ_{k}\rfloor}{n}\right)_{1\le k\le m} \todistr\ ((\mathcal{S}_{\alpha}^{(k)}(Z_{k}))^{2},2Z_{k})_{1\le k\le m},
\]
which in turn holds because $a_{n}=o(n)$ as $n\to\infty$ and  $(S_{i,j})_{i, j \in \N}$ and $(Z_{k})_{k \in \N}$ are independent.

\vspace{.1cm}
For \eqref{eq:infinite_variance_bill2}, we argue as follows. Using formula \eqref{eq:lem_ui_proof00}, it is enough to check that
\begin{equation}\label{eq:infinite_variance_bill21}
\lim_{m\to\infty}\limsup_{n\to\infty}\,\P\left\{\sum_{k=m}^{n} \frac{S^{2}_{k,\lfloor nZ_{k}\rfloor}}{2 \lfloor nZ_{k}\rfloor}\ind_{\{nZ_{k}\ge 1\}}>\varepsilon b_{n}\right\}\ =\ 0.
\end{equation}
Fix $\delta\in(0,\alpha/2)$. Then, using the subadditivity of $x\mapsto x^{\delta}$ and Markov's inequality, we see that \eqref{eq:infinite_variance_bill21} is a consequence of
\begin{equation}\label{eq:infinite_variance_bill22}
\lim_{m\to\infty}\limsup_{n\to\infty} \frac{1}{b_{n}^\delta} \sum_{k=m}^{n}\E \left(\frac{S^{2}_{k,\lfloor nZ_{k}\rfloor}}{2 \lfloor nZ_{k}\rfloor}\ind_{\{nZ_{k}\ge 1\}}\right)^{\delta}=0.
\end{equation}
By Lemma 5.2.2 in \cite{Ibragimov+Linnik:1971}, there exists a constant $C=C_{\delta,\alpha}>0$ such that
\begin{equation} \label{eq:moments_bound}
\E |S_l |^{2\delta}\ \le\ Ca^{2\delta}_l,\quad l\in\N.
\end{equation}
Therefore,
\begin{gather*}
\E\left(\frac{S^{2}_{k,\lfloor nZ_{k}\rfloor}}{2 \lfloor nZ_{k}\rfloor}\ind_{\{nZ_{k}\ge 1\}}\right)^{\delta}\ =\ \sum_{l=1}^{n}\P\{\lfloor nZ_{k}\rfloor=l\}\,\E\left(\frac{S^{2}_{l}}{2l}\right)^{\delta}\\
\le\ C2^{-\delta}\sum_{l=1}^{n}\P\{\lfloor nZ_{k}\rfloor=l\}\frac{a^{2\delta}_l}{l^{\delta}}\ =\ C2^{-\delta}\,\E b^{\delta}_{\lfloor nZ_{k}\rfloor}\ind_{\{\lfloor nZ_{k}\rfloor\ge 1\}},
\end{gather*}
and \eqref{eq:infinite_variance_bill22} will follow once having shown that
\begin{equation}\label{eq:infinite_variance_bill23}
\lim_{m\to\infty}\limsup_{n\to\infty} \frac{1}{b_{n}^\delta} \sum_{k=m}^{n} \E \left(b^{\delta}_{ \lfloor nZ_{k}\rfloor}\ind_{\{ \lfloor nZ_{k}\rfloor\ge 1\}}\right)=0.
\end{equation}

Fix an arbitrary $\delta'\in(0,(\frac{2}{\alpha}-1)\delta)$. Since $(b_{n}^{\delta})_{n\in\N}$ is regularly varying with index $(\frac{2}{\alpha}-1)\delta$, we can apply Potter's bound to the slowly varying sequence $(b_{n}^{\delta}/ n^{(2/\alpha-1)\delta})_{n\in\N}$, see \cite[Thm.~1.5.6(ii)]{BGT}. In combination with $ \lfloor nZ_{k}\rfloor\le n$, this gives
$$
\frac{b^{\delta}_{ \lfloor nZ_{k}\rfloor}}{b_{n}^{\delta}}\ \le\ {\rm const}\cdot \left(\frac{\lfloor nZ_{k}\rfloor}{n}\right)^{(2/\alpha-1)\delta-\delta'}\ \le\ {\rm const}\cdot Z_{k}^{(2/\alpha-1)\delta-\delta'}.
$$
Moreover, by formula (2.1) in \cite{Arratia+Barbour+Tavare:2006},
\begin{equation}\label{eq:infinite_variance_final_formula}
\sum_{k=1}^{\infty}\E Z_{k}^{(2/\alpha-1)\delta-\delta'}\ =\ \int_{0}^{1}x^{(2/\alpha-1)\delta-\delta'-1}{\rm d}x<\infty.
\end{equation}
Hence \eqref{eq:infinite_variance_bill23} follows and the proof of \eqref{eq:infinite_variance_bill2} is complete.
\end{proof}

\subsection{Proof of Theorem \ref{thm:infinite_mean}}\label{sec:proof_inf_mean}

Put
$$
\tau_{n}\,:=\,\argmax_{0\le k\le n}S_{k}\quad\text{and}\quad\kappa_{n}\,:=\,\argmin_{0\le k\le n}S_{k},
$$
(where, for the sake of definiteness, the position of the \emph{first} maximum or minimum is taken) and further,
$$
M_{n}=\max_{0\le k\le n}S_{k}\quad\text{and}\quad m_{n}=\min_{0\le k\le n}S_{k},
$$
see Figure~\ref{fig:proof_inf_mean}.

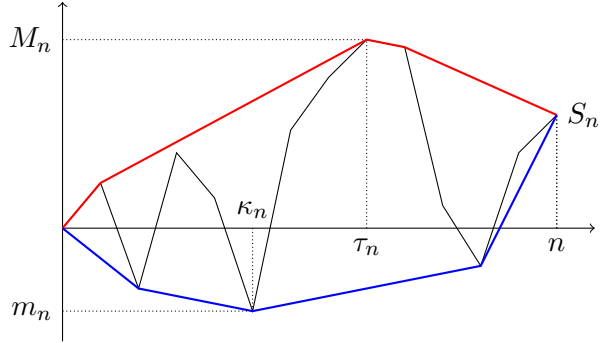
\begin{figure}[!hbtp]
\centering
\begin{tikzpicture}
\draw[->] (0,-1.5) -- (0,3.0);
\draw[->] (0,0) -- (7.0,0.0);
\draw[-] (0,0) -- (0.5,0.6) -- (1.0,-0.8) -- (1.5,1.0) -- (2.0,0.4) -- (2.5,-1.1) -- (3.0, 1.3) -- (3.5, 2.0) -- (4.0,2.5) -- (4.5,2.4) -- (5.0,0.3) -- (5.5,-0.5) -- (6.0,1.0) -- (6.5,1.5);
\draw[densely dotted] (4.0,0) -- (4.0,2.5);
\draw[densely dotted] (0,2.5) -- (4.0,2.5);
\draw[densely dotted] (6.5,0) -- (6.5,1.5);
\draw[densely dotted] (2.5,0) -- (2.5,-1.1);
\draw[densely dotted] (6.5,0) -- (6.5,1.5);
\draw[densely dotted] (0,-1.1) -- (2.5,-1.1);
\node[left,scale=1.0] (c) at (0,2.5) {$M_{n}$};
\node[left,scale=1.0] (c) at (0,-1.1) {$m_{n}$};
\node[above,scale=1.0] (c) at (2.5,0) {$\kappa_{n}$};
\node[below,scale=1.0] (c) at (4.0,0.0) {$\tau_{n}$};
\node[right,scale=1.0] (c) at (6.5,1.5) {$S_{n}$};
\node[below,scale=1.0] (c) at (6.5,0.0) {$n$};
\draw[thick,red] (0,0) -- (0.5,0.6) -- (4.0,2.5) -- (4.5,2.4) -- (6.5,1.5);
\draw[thick,blue] (0,0) -- (1.0,-0.8) -- (2.5,-1.1) -- (5.5,-0.5) -- (6.5,1.5);
\end{tikzpicture}
\caption{The proof of Theorem \ref{thm:infinite_mean}. The thin black piecewise linear line is a linearly interpolated random walk $(S_{k})_{k=0,\ldots,n}$, the thick red line consisting of four line segments is its concave majorant,
the thick blue line is its convex minorant.}
\label{fig:proof_inf_mean}
\end{figure}

Our reasoning is geometric and based on  a simple comparison argument. First of all note that the concave majorant consists of two piecewise linear subparts: one connecting the origin and the point $(\tau_{n},M_{n})$, and the other connecting $(\tau_{n},M_{n})$ and $(n,S_{n})$. Denote their lengths by $\majlength_{1,n}$ and $\majlength_{2,n}$, respectively, thus $\majlength_{n}=\majlength_{1,n}+\majlength_{2,n}$. The triangle inequality applied to every segment of the concave majorant provides
\begin{gather*}
M_{n}\,\le\,\majlength_{1,n}\,\le\,M_{n}+\tau_{n}\,\le\,M_{n}+n,
\shortintertext{and also}
M_{n}-S_{n}\,\le\,\majlength_{2,n}\,\le\,M_{n}-S_{n} + n-\tau_{n}\,\le\,M_{n}-S_{n} + n.
\end{gather*}
Consequently,
$$ 2M_{n}-S_{n}\,\le\,\majlength_{n}\,\le\,2M_{n}-S_{n}+2n. $$
Similarly,
$$
S_{n}-2m_{n}\,\le\,\minlength_{n}\,\le\,S_{n}-2m_{n} + 2n.
$$
To complete he proof of Theorem \ref{thm:infinite_mean}, it remains to note that $n/a_{n}\to 0$ because $(a_{n})_{n\in\N}$ is regularly varying with index $1/\alpha>1$ and
$$
\left(\frac{S_{n}}{a_{n}},\frac{M_{n}}{a_{n}},\frac{m_{n}}{a_{n}}\right)\ \todistr\ \big (\mathcal{S}_{\alpha}(1),\sup_{t\in[0,1]}\mathcal{S}_{\alpha}(t),\inf_{t\in[0,1]}\mathcal{S}_{\alpha}(t) \big)
$$
as an immediate consequence of \eqref{eq:clt_for_rw} and the continuous mapping theorem.

\section{The case of nonzero mean}\label{sec:non_zero_mu}

Throughout this section, we assume that $\mu:=\E\xi$ exists and is nonzero which rules out case (C). We are left with two possibilities:
\begin{itemize}
\item[(A$'$)] $\E \xi^{2}<\infty$ and $\mu\neq 0$;
\item[(B$'$)] $\E \xi^{2}=\infty$, $\mu\neq 0$, and the law of $\xi$ lies in the domain of attraction of an $\alpha$-stable law with $\alpha\in(1,2]$.
\end{itemize}
We stress that case (B$'$) includes $\alpha=2$, as opposed to case (B).

\vspace{.1cm}
The essence of the next theorem is that in both cases (A$'$) and (B$'$), the distributional behavior of $\minlength_{n}$ as $n\to\infty$ coincides with that of $S_{n}$ up to a linear centering and a scaling as in Theorem~1.7 in~\cite{McRedmond+Wade:2018}. We will see that the same holds for $L_{n}$, the perimeter of the convex hull of $\{(j,S_{j}): j=0,\ldots,n\}$.

\begin{theorem}\label{thm:non_zero_mu}
In the cases $(A')$ and $(B')$, we have
\begin{equation} \label{eq:conv_min_CLT}
\frac{\minlength_{n}-\sqrt{1+\mu^{2}}\,n}{a_{n}}\ \todistr\ \frac{\mu}{\sqrt{1+\mu^{2}}}\,\mathcal{S}_{\alpha}(1),
\end{equation}
where $a_{n} := \sqrt{n}$, $\sigma^{2}:=\E\xi^{2} -\mu^{2}$, and $\mathcal{S}_{2}(1):=\mathcal{N}(0,\sigma^{2})$ in case $(A')$, and
$(a_{n})_{n \in \N}$ is a sequence and $\mathcal{S}_{\alpha}(1)$ an $\alpha$-stable random variable such that $(S_{n}-\mu n)/a_{n}\stackrel{d}{\to}\mathcal{S}_{\alpha}(1)$.

\vspace{.1cm}\noindent
The convergence \eqref{eq:conv_min_CLT} does also hold with $\majlength_{n}$ and $L_{n}/2$ in the place of $\minlength_{n}$.
\end{theorem}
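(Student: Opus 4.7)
The plan is a pathwise Taylor expansion of the segment-length function $(c,s)\mapsto\sqrt{c^{2}+s^{2}}$ around the drift line $s=\mu c$, followed by an error bound obtained through the uniform random permutation representation \eqref{eq:basic_representation1}. Both $\minorant_{n}$ and $\majorant_{n}$ are piecewise linear curves joining $(0,0)$ to $(n,S_{n})$, so their horizontal/vertical segment increments $(c_{i},s_{i})$ satisfy $\sum_{i}c_{i}=n$ and $\sum_{i}s_{i}=S_{n}$. For any fixed $c>0$, the map $y\mapsto\sqrt{c^{2}+y^{2}}$ is convex with value $\sqrt{1+\mu^{2}}\,c$ and derivative $\mu/\sqrt{1+\mu^{2}}$ at $y=\mu c$, and second derivative bounded by $1/c$. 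Cauchy--Schwarz gives the lower bound $\sqrt{c^{2}+s^{2}}\ge(c+\mu s)/\sqrt{1+\mu^{2}}$, and Taylor's theorem with Lagrange remainder upgrades this to
\begin{equation*}
0\ \le\ \sqrt{c^{2}+s^{2}}-\frac{c+\mu s}{\sqrt{1+\mu^{2}}}\ \le\ \frac{(s-\mu c)^{2}}{2c}.
\end{equation*}
Summation over segments then yields the pathwise sandwich
\begin{equation*}
0\ \le\ \minlength_{n}-\sqrt{1+\mu^{2}}\,n-\frac{\mu(S_{n}-\mu n)}{\sqrt{1+\mu^{2}}}\ \le\ \frac{1}{2}\sum_{i=1}^{F_{n}}\frac{(s_{i}-\mu c_{i})^{2}}{c_{i}},
\end{equation*}
and the analogous inequality holds for $\majlength_{n}$ with the segments of $\majorant_{n}$.

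The second step is to show the right-hand side is $o_{P}(a_{n})$. By the representation recalled in Subsection~\ref{subsec:uniform_perm}, this sum is distributed as $\sum_{k=1}^{K_{n}}(S_{k,Z_{n,k}}-\mu Z_{n,k})^{2}/Z_{n,k}$. In case (A'), using $\Var S_{j}=j\sigma^{2}$ and $\E K_{n,j}=1/j$, the expectation equals $\sigma^{2}\sum_{j=1}^{n}1/j\sim\sigma^{2}\log n=o(\sqrt{n})=o(a_{n})$, so Markov's inequality yields the bound. In case (B'), I fix $\delta\in(0,\alpha/2)$, apply subadditivity of $x\mapsto x^{\delta}$, and invoke the stable-domain moment estimate $\E|S_{j}-\mu j|^{2\delta}\le C a_{j}^{2\delta}$ analogous to \eqref{eq:moments_bound}, to obtain
\begin{equation*}
\E\bigg(\sum_{k=1}^{K_{n}}\frac{(S_{k,Z_{n,k}}-\mu Z_{n,k})^{2}}{Z_{n,k}}\bigg)^{\!\delta}\ \le\ C\sum_{j=1}^{n}\frac{a_{j}^{2\delta}}{j^{1+\delta}}.
\end{equation*}
By Karamata's theorem this is $O((a_{n}^{2}/n)^{\delta})$ for $\alpha\in(1,2)$ and $O((a_{n}^{2}/n)^{\delta}\log n)$ for $\alpha=2$; in both subcases Markov's inequality gives $o_{P}(a_{n})$ because $a_{n}/n$ tends to zero as a strictly negative power. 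Slutsky's theorem combined with $(S_{n}-\mu n)/a_{n}\todistr\mathcal{S}_{\alpha}(1)$ then delivers \eqref{eq:conv_min_CLT}.

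The statement for $\majlength_{n}$ is obtained identically: the Taylor sandwich is purely algebraic and so applies verbatim to the segments of $\majorant_{n}$, while the error bound transfers in law via the reflection $S\mapsto -S$ behind the identity $\minlength_{n}\od\majlength_{n}$ (only the squared centered increments enter, and these are invariant under the reflection). Since the pathwise expansions of $\minlength_{n}$ and $\majlength_{n}$ share the \emph{same} first-order correction $\mu(S_{n}-\mu n)/\sqrt{1+\mu^{2}}$, averaging them immediately gives the same asymptotic law for $L_{n}/2$, the two corrections adding rather than cancelling. The hardest step will be the error estimate in case (B') at $\alpha=2$, where the extra $\log n$ factor inherited from the slowly varying sequence $b_{n}=a_{n}^{2}/n$ must be shown not to spoil the $o(a_{n})$ conclusion; this is handled by the polynomial decay of $a_{n}/n$.
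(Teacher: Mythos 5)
Your proposal is correct and follows essentially the same route as the paper: the two-sided Taylor/tangent-line bound $0\le\sqrt{c^2+s^2}-(c+\mu s)/\sqrt{1+\mu^2}\le (s-\mu c)^2/(2c)$ is exactly the paper's pair of inequalities, and the error term is killed via the permutation representation with $\E K_{n,j}=1/j$, the $\delta$-moment bound $\E|S_j-\mu j|^{2\delta}\le Ca_j^{2\delta}$, and Markov. The only (harmless) differences are cosmetic: you state the sandwich pathwise for the polygonal curve rather than after passing to the representation, which makes the joint treatment of $\minlength_n$, $\majlength_n$ and $L_n/2$ slightly more transparent, and in case (A$'$) you use a direct second-moment computation where the paper uses the $\delta$-trick uniformly.
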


The result for $L_{n}$ in case (A$'$) is a particular case of Theorem~1.2 in~\cite{Wade+Xu:2015b} (combined with Theorem~1.8 in~\cite{McRedmond+Wade:2018}), which applies because $(n, S_{n})_{n \in \N_{0}}$ can be regarded as a random walk in the plane. The approach of~\cite{McRedmond+Wade:2018, Wade+Xu:2015b} is different from the one employed here: we combine the formula $L_{n}=\majlength_{n} + \minlength_{n}$ with the limit result~\eqref{eq:conv_min_CLT} (or, to be more precise, \eqref{eq:non_zero_mean_proof1}) for $\minlength_{n}$ and its version for $\majlength_{n}$, both obtained from representation~\eqref{eq:basic_representation2}. Recall that this argument does not work when $\mu =0$; cf.~Remark~\ref{rem:non-joint}.

\begin{proof}
Having $(S_{n}-\mu n)/a_{n}\stackrel{d}{\to}\mathcal{S}_{\alpha}(1)$ in both cases (A$'$) and (B$'$) (naturally with $\alpha=2$ in (A$'$)), \eqref{eq:conv_min_CLT} follows if we can show that
\begin{equation}\label{eq:non_zero_mean_proof1}
\frac{1}{a_{n}}\left|\minlength_{n}-\sqrt{1+\mu^{2}}n - \frac{\mu}{\sqrt{1+\mu^{2}}}(S_{n} - \mu n)\right|\ \toprobab\ 0,
\end{equation}
which in case (A$'$) is just a particular case of Theorem~1.7 in~\cite{McRedmond+Wade:2018}.

\vspace{.1cm}
We use an extended version of representation \eqref{eq:basic_representation2}, viz.
\begin{equation}\label{eq:basic_representation2'}
(\minlength_{n}-n, S_{n})\ \od\ \Bigg( \sum_{j=1}^{n}\sum_{i=1}^{K_{n,j}}\sqrt{j^{2}+S^{2}_{i,j}}-j, S_{n} \Bigg).
\end{equation}
Defining
$$
\widetilde{S}_{i,j}:=S_{i,j}-\mu j,\quad
$$
for $i,j\in\N$, we have
$$
\sum_{j=1}^{n}\sum_{i=1}^{K_{n,j}}\widetilde{S}_{i,j}\ =\ S_{n}-\mu n,
$$
and
\begin{align*}
\Delta_{n}\ :=\ &\minlength_{n}-\sqrt{1+\mu^{2}}\,n - \frac{\mu}{\sqrt{1+\mu^{2}}}(S_{n} - \mu n)\\
\od\ &\sum_{j=1}^{n}\sum_{i=1}^{K_{n,j}}\left(\sqrt{j^{2}+(\mu j+\widetilde{S}_{i,j})^{2}}-\sqrt{1+\mu^{2}}\,j-\frac{\mu}{\sqrt{1+\mu^{2}}}\widetilde{S}_{i,j}\right)\\
=\ &\sqrt{1+\mu^{2}}\sum_{j=1}^{n}\sum_{i=1}^{K_{n,j}}j\left(\sqrt{1+\frac{\widetilde{S}_{i,j}^{2}+2\mu j \widetilde{S}_{i,j}}{(1+\mu^{2})j^{2}}}-1-\frac{\mu}{1+\mu^{2}}\frac{\widetilde{S}_{i,j}}{j}\right).
\end{align*}
Further, we will use the inequalities
\begin{gather}\label{eq:non_zero_mean_proof2}
1+\frac{y}{2}\le \sqrt{1+x+y}\quad\text{for}\quad x\ge y^{2}/4,\ x+y\ge -1,
\shortintertext{and}
\label{eq:non_zero_mean_proof3}
\sqrt{1+z}\le 1+\frac{z}{2}\quad\text{for}\quad z\ge -1.
\end{gather}
The first one with $x:=\widetilde{S}_{i,j}^{2}/((1+\mu^{2})j^{2})$ and $y:=2\mu j \widetilde{S}_{i,j}/((1+\mu^{2})j^{2})$ yields $\Delta_{n}\ge 0$ a.s. Then the second one with $z:=\widetilde{S}_{i,j}^{2}+2\mu j\widetilde{S}_{i,j}/((1+\mu^{2})j^{2})$ implies
$$
\P\{|\Delta_{n}|>\varepsilon a_{n}\}\ \le\
\P\left\{\sum_{j=1}^{n}\sum_{i=1}^{K_{n,j}}\frac{\widetilde{S}^{2}_{i,j}}{2\sqrt{1+\mu^{2}}j}>\varepsilon a_{n}\right\}
$$
for every $\varepsilon>0$.

\vspace{.1cm}
Fix $\delta\in (0,\alpha/2)$. Then, by subadditivity of $x\mapsto x^{\delta}$ and Markov's inequality, it suffices to check that
\begin{equation} \label{eq:from_Markov}
\lim_{n\to\infty}\frac{1}{a_{n}^{\delta}} \E\left( \sum_{j=1}^{n}\sum_{i=1}^{K_{n,j}} \frac{\widetilde{S}^{2 \delta}_{i,j}}{j^\delta}\right) =0.
\end{equation}
We use \eqref{eq:moments_bound}, the independence of $\widetilde{S}_{i,j}$ and~$K_{n,j}$ and the formula $\E K_{n,j}=j^{-1}$ stated in \eqref{eq:K_{n}j_moments} to infer
$$
\E\left( \sum_{j=1}^{n}\sum_{i=1}^{K_{n,j}} \frac{\widetilde{S}^{2 \delta}_{i,j}}{j^\delta}\right)\ =\ \sum_{j=1}^{n}\frac{1}{j}\frac{\E S^{2\delta}_{j}}{j^{\delta}}\ \le\ {\rm const}\cdot\sum_{j=1}^{n}\frac{a_{j}^{2\delta}}{j^{1+\delta}},
$$
where the last inequality is a consequence of \eqref{eq:moments_bound}. Since $(a_{j})_{j\in\N}$ is regularly varying with index $1/\alpha$, the sequence $(j^{-1-\delta}a_{j}^{2\delta})_{j\in\N}$ is regularly varying with  index $2\delta/\alpha-1-\delta\ge -1$ and the sum is regularly varying with index $2\delta/\alpha-\delta$. And since the latter is smaller than $\delta/\alpha$, the index of regular variation of $(a_{n}^{\delta})_{j\in\N}$ for $\alpha>1$, we conclude \eqref{eq:from_Markov} thus completing the proof of \eqref{eq:conv_min_CLT}.

\vspace{.1cm}
Since the representation \eqref{eq:basic_representation2'} is also valid for $\majlength_{n}$ instead of $\minlength_{n}$, the convergence \eqref{eq:conv_min_CLT} and the coupling \eqref{eq:non_zero_mean_proof1} (between $\minlength_{n}$ and a normalization of~$S_{n}$) remain true for $\majlength_{n}$ in the place of $\minlength_{n}$. By combining  \eqref{eq:non_zero_mean_proof1} for both $\minlength_{n},\,\majlength_{n}$ finally provides  \eqref{eq:conv_min_CLT} for $L_{n}/2=(\minlength_{n} + \majlength_{n})/2$.
\end{proof}

\section*{Acknowledgment}

G.~Alsmeyer and Z.~Kabluchko were partially funded by the Deutsche Forschungsgemeinschaft (DFG) under Germany's Excellence Strategy EXC 2044--390685587, Mathematics M\"unster: Dynamics--Geometry--Structure. V.~Vysotsky was supported in part by the RFBR Grant 19-01-00356.

\vspace{1cm}

\footnotesize

\textsc{Gerold Alsmeyer and Zakhar Kabluchko:}  Institute of Mathematical Stochastics,
Department of Mathematics and Computer Science, University of M\"unster, Orl\'eans-Ring 10,
D-48149 M\"unster, Germany\\
\textit{E-mails}: \texttt{gerolda@uni-muenster.de, zakhar.kabluchko@uni-muenster.de}

\bigskip

\textsc{Alexander Marynych:} Faculty of Computer Science and Cybernetics, Taras Shev\-chen\-ko National University of Kyiv,
01601 Kyiv, Ukraine\\
\textit{E-mail}: \texttt{marynych@unicyb.kiev.ua}

\bigskip

\textsc{Vladislav Vysotsky:} University of Sussex, Sussex House, Falmer Brighton, BN1 9RH United Kingdom and St.\ Petersburg Department of Steklov Mathematical Institute, Fontanka~27,
191011 St.\ Petersburg, Russia\\
\textit{E-mail}: \texttt{V.Vysotskiy@sussex.ac.uk}

\end{document}